
\documentclass[preprint,12pt]{elsarticle}




\usepackage{amssymb}

\usepackage{amsthm}
\usepackage{amsmath}

\usepackage[matrix,arrow,curve,frame,arc,line,tips]{xy}

\renewcommand{\mod}{\operatorname{mod}}

\newcommand{\add}{\operatorname{add}}
\newcommand{\rad}{\operatorname{rad}}

\newcommand{\Hom}{\operatorname{Hom}}
\newcommand{\End}{\operatorname{End}}

\newcommand{\Ker}{\operatorname{Ker}}

\newcommand{\soc}{\operatorname{soc}}

\renewcommand{\Im}{\operatorname{Im}}

\newcommand{\repdim}{\operatorname{rep.dim.}}
\newcommand{\gldim}{\operatorname{gl.dim.}}

\newcommand{\cC}{\mathcal{C}}

\newcommand{\bA}{\mathbb{A}}

\newcommand{\bZ}{\mathbb{Z}}

\makeatletter
\def\dddots{\mathinner{\mkern1mu\raise1\p@
    \vbox{\kern13\p@\hbox{.}}\mkern2mu
    \raise4\p@\hbox{.}\mkern2mu\raise7\p@\hbox{.}\mkern1mu}}
\makeatother


\newtheorem{theorem}{Theorem}[section]

\newtheorem{lemma}[theorem]{Lemma}
\newtheorem{proposition}[theorem]{Proposition}
\newtheorem{corollary}[theorem]{Corollary}

\newtheorem{maintheorem}{Theorem}

\newtheorem{maincorollary}[maintheorem]{Corollary}

\theoremstyle{definition}

\newtheorem*{definition*}{Definition}

\newtheorem{example}[theorem]{Example}





\journal{Journal of Pure and Applied Algebra}

\begin{document}

\begin{frontmatter}



\title{Invariance of representation dimension under socle equivalence of selfinjective algebras}


\author[auth1]{Ibrahim Assem}
    \ead{ibrahim.assem@usherbrooke.ca}

    \address[auth1]{D\'epartement de math\'ematiques, Facult\'e des sciences, Universit\'e de Sherbrooke,
            Sherbrooke, Qu\'ebec J1K 2R1, Canada.}

\author[auth2]{Andrzej Skowro\'nski}
    \ead{skowron@mat.uni.torun.pl}
    \address[auth2]{Faculty of Mathematics and Computer Science, Nicolaus Copernicus University,
           Chopina 12/18, 87-100 Toru\'n, Poland.}
\author[auth3]{Sonia Trepode\corref{cor1}}
    \ead{strepode@mdp.edu.ar}
    \address[auth3]{Centro Marplatense de Investigaciones Matem\'aticas (CEMIM), Facultad de Ciencias Exactas y Naturales, Funes 3350,
            Universidad Nacional de Mar del Plata. Conicet, 7600 Mar del Plata, Argentina.}
\cortext[cor1]{Corresponding author}

%
%

\begin{abstract}
We prove that the representation dimension of finite dimensional
selfinjective algebras over a field is invariant under socle equivalence
and derive some consequences.
\end{abstract}

\begin{keyword}
Representation dimension \sep Selfinjective algebra \sep Socle equivalence \sep Auslander-Reiten quiver
\MSC[2010] 16G10 \sep 16G60 \sep 16G70 \sep 18G20
\end{keyword}

\end{frontmatter}

\begin{center}
\vspace*{-60.75mm}
\textit{Dedicated to Idun Reiten on the occasion of her 75th birthday}
\vspace*{52.75mm}
\end{center}


\section{Introduction}
\label{sec:intro}

Homological invariants are used to measure how far does
an algebra, or a module, deviates from a situation considered
to be ideal.
From the point of view of representation theory, one of the
most interesting and mysterious homological invariants
is the representation dimension of an Artin algebra,
introduced by Maurice Auslander in the early seventies \cite{Au}
and meant to measure the complexity of the morphisms in a module
category.
Interest in this invariant was revived when Igusa and Todorov
proved that algebras of representation dimension three have
 finite finitistic dimension \cite{IT}.
Iyama proved that the representation
dimension of an Artin algebra is always finite \cite{I}
and Rouquier that there exist algebras of arbitrarily
large representation dimension \cite{R}.
One important question is to identify which
algebraic procedures leave the representation dimension invariant.
It is known that stable equivalence preserves the representation
dimension, a result proved independently by Dugas \cite{D}
and Guo \cite{Gu}.
While derived equivalence does not, in general, preserve
the representation dimension, this is the case for selfinjective
algebras \cite{X}.

Our objective in this paper is to prove that the representation
dimension is preserved under socle equivalence of selfinjective
algebras.
We recall that two finite dimensional selfinjective algebras
$A$ and $A'$ over an arbitrary field $K$ are called socle equivalent
provided the quotient algebras $A / \soc A$ and $A' / \soc A'$
are isomorphic.
Socle equivalence plays a prominent r\^ole in the representation
theory of selfinjective algebras.
Frequently, interesting selfinjective algebras are socle equivalent
to others for whom
the representation theory and related invariants are well-understood.
For some results in this direction we refer the reader to
\cite{BiS},
\cite{BoS1},
\cite{ES2},
\cite{EHIS},
\cite{S3},
\cite{SY1}
or
\cite{SY2}.
Our main theorem is then the following.

\begin{maintheorem}
\label{th:A}
Let $A, A'$ be basic and connected socle equivalent
selfinjective algebras.
Then $A$ and $A'$ have the same representation dimension.
\end{maintheorem}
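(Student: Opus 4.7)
The natural strategy is to reduce the statement to the theorem of Dugas \cite{D} and Guo \cite{Gu}, cited in the introduction, that stable equivalence preserves the representation dimension of Artin algebras. Thus it suffices to prove that any two basic connected socle equivalent selfinjective algebras $A$ and $A'$ are stably equivalent.

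First, one can dispose of the trivial case in which $A$ and $A'$ are semisimple: here $\soc A = A$ forces $A \cong A'$ directly. Assume henceforth that $A$ is non-simple, so $\soc A \subseteq \rad A$, and fix the given isomorphism $\phi \colon \bar A \xrightarrow{\sim} \bar{A'}$, where $\bar A = A/\soc A$ and $\bar{A'} = A'/\soc A'$. I would then attempt to build a functor $F \colon \underline{\mod}\, A \to \underline{\mod}\, A'$ by lifting minimal projective presentations through $\phi$: given a non-projective indecomposable $M$ with minimal projective presentation $P_1 \xrightarrow{f} P_0 \to M \to 0$, the matrix of $f$ has entries in $\rad A$, so its reduction modulo $\soc A$ can be transported via $\phi$ and then lifted to a matrix $f'$ with entries in $\rad A'$, whose cokernel $M' := F(M)$ is a non-projective $A'$-module. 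A symmetric construction on the $A'$-side produces the quasi-inverse.

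The main obstacle is to promote this object-level correspondence to a genuine equivalence of stable categories. Two lifts of $f$ through $\soc A'$ differ by a matrix with entries in $\soc A'$, and one must verify that the resulting cokernels become isomorphic in $\underline{\mod}\, A'$; the analogous issue arises for morphisms, whose lifts differ by a map factoring through the socle. These verifications rest on selfinjectivity: on a selfinjective algebra, a map whose image lies in the socle of a projective module factors through the injective envelope of that socle, which itself is projective-injective, and hence is trivial in the stable category. One must also check that $F$ intertwines the Auslander--Reiten translation $\tau = \nu \Omega^2$, which amounts to verifying that both the Nakayama permutation and the syzygy functor are compatible with the socle quotient. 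Once the stable equivalence is established, $\repdim A = \repdim A'$ follows immediately from Dugas--Guo.
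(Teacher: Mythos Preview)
Your strategy cannot succeed, because the claim it rests on is false: socle equivalent selfinjective algebras are \emph{not} in general stably equivalent. The paper itself records explicit counterexamples. In Example~\ref{ex:5.3}, the four-dimensional local algebras $A(\lambda) = K\langle x,y\rangle/(x^2,y^2,xy-\lambda yx)$ are all socle equivalent to one another, yet by a result of Rickard $A(\lambda)$ and $A(\mu)$ are stably equivalent only when $\mu\in\{\lambda,\lambda^{-1}\}$. Likewise, in Example~\ref{ex:5.1} the algebras $A$ and $A'$ are socle equivalent but explicitly not stably equivalent. So the reduction to Dugas--Guo is blocked at the very first step.

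The specific place your argument breaks is the assertion that ``a map whose image lies in the socle of a projective module factors through the injective envelope of that socle, which itself is projective-injective, and hence is trivial in the stable category.'' Such a map certainly factors through a projective, but the issue is that your object $F(M)$ is only well-defined up to a choice of lift of $f$ modulo $\soc A'$, and two such lifts may produce genuinely non-isomorphic (even stably non-isomorphic) cokernels. The ambiguity in the lift is not a morphism \emph{from} $M'$ but a perturbation of the presentation map itself, and there is no reason the cokernels should agree stably. The $A(\lambda)$ family shows concretely that they do not.

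The paper's proof proceeds entirely differently and avoids the stable category. It fixes an Auslander generator $M = N\oplus A$ with $N$ having no projective summand, sets $M' = N\oplus A'$, and shows directly that every right minimal $\add M$-approximation resolution of a nonprojective module $X$ in $\mod A$ can be converted, term by term, into a right minimal $\add M'$-approximation resolution of the same $X$ in $\mod A'$ (Proposition~\ref{prop:3.1}, Lemmata~\ref{lem:3.2} and~\ref{lem:3.3}). The key manoeuvre is that any map into or out of a projective $P$ factors through $\rad P$ or $P/\soc P$ respectively, and these modules coincide for $P$ and its counterpart $P'$ over $A'$; this lets one swap $P$ for $P'$ while keeping the nonprojective parts fixed. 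The equality of representation dimensions then follows from the characterisation in Theorem~\ref{th:2.3}.
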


Our proof is constructive:
given an Auslander generator for $\mod A$,
we show how to construct one for $\mod B$.

Auslander's expectation was that the representation dimension
would provide a reasonable way to measure how far an algebra is from
being representation finite.
Because an algebra is representation finite if and only if
it has representation dimension two \cite{Au}, algebras
of representation dimension three present a special interest.
In this line of ideas, we apply Theorem~\ref{th:A}
to selfinjective algebras of tilted type.
We recall that a selfinjective algebra $A$ is called of tilted type
if there exists a tilted algebra $B$ such that $A$ is an orbit
category of the repetitive category $\widehat{B}$ of $B$,
in the sense of \cite{HW}.
As a first consequence of Theorem~\ref{th:A} and the results
of \cite{AST1}, \cite{AST2},
we show that, if $A$ is a selfinjective algebra socle equivalent
to a representation-infinite selfinjective algebra of tilted type,
then the representation dimension of $A$ equals three.

We next turn to the problem of relating the representation
dimension with the shape of Auslander-Reiten components.
Using the notation of stable slice introduced in \cite{SY6},
we prove our second main result.

\begin{maintheorem}
\label{th:B}
Let $A$ be a representation-infinite basic and connected
selfinjective algebra admitting a $\tau_A$-rigid  stable slice
in its Auslander-Reiten quiver.
Then the representation dimension of $A$ equals three.
\end{maintheorem}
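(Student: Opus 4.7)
The plan is to reduce Theorem~\ref{th:B} to the consequence of Theorem~\ref{th:A} already recalled in the introduction, namely that a selfinjective algebra socle equivalent to a representation-infinite selfinjective algebra of tilted type has representation dimension three. Concretely, I would show that the hypothesis on $A$ forces $A$ itself to be socle equivalent to such an algebra.

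First, I would invoke the structural results of \cite{SY6}, where the notion of stable slice is introduced. What is needed is that, for a basic connected selfinjective algebra $A$, the existence of a $\tau_A$-rigid stable slice $\Sigma$ in the Auslander--Reiten quiver of $A$ implies that $A$ is socle equivalent to an orbit algebra $\widehat{B}/G$ of the repetitive category of a tilted algebra $B$, that is, to a selfinjective algebra of tilted type. The tilted algebra $B$ and the admissible group $G$ should be recoverable from the shape of $\Sigma$ and its $\tau_A$-orbit data; the $\tau_A$-rigidity is precisely what, via the Auslander--Reiten formula $\Hom_A(M_i, \tau_A M_j) \cong D\Ext_A^1(M_j, M_i)$, provides the $\Ext^1$-vanishing required to lift $\Sigma$ to a complete slice in $\widehat{B}$.

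Second, if $A'$ denotes such a selfinjective algebra of tilted type socle equivalent to $A$, one has to verify that $A'$ is representation-infinite. This follows because socle equivalence of selfinjective algebras induces an equivalence of stable categories, and hence preserves representation type: $A'$ cannot be representation-finite, since $A$ is not. The consequence of Theorem~\ref{th:A} derived from \cite{AST1,AST2} then yields $\repdim A = 3$, the lower bound $\repdim A \ge 3$ being in any case automatic from the representation-infiniteness of $A$ together with Auslander's characterisation \cite{Au}.

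The main obstacle is thus the structural step: extracting, from the purely local datum of a single $\tau_A$-rigid stable slice, the global conclusion that $A$ is socle equivalent to an orbit algebra of a repetitive category of a tilted algebra. This amounts to reconstructing $B$ as the endomorphism algebra of an appropriate lift of $\Sigma$ into $\widehat{B}$, checking that the Galois covering it defines is admissible, and comparing socles in order to obtain a socle equivalence rather than an outright isomorphism. Once this identification is available, Theorem~\ref{th:B} is a clean assembly of Theorem~\ref{th:A} with the results already cited in the introduction.
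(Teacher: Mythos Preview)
Your overall strategy matches the paper's: reduce to Corollary~\ref{cor:4.2} by invoking the structural results of \cite{SY6} to exhibit $A$ as socle equivalent to a selfinjective algebra of tilted type. So the architecture is right.

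Where you diverge from the paper is in how the structural step is executed. You describe it as lifting the slice into $\widehat{B}$ and reconstructing $B$ from there via covering theory. The paper instead works entirely inside $\mod A$: it sets $B = A/I$ where $I$ is the right annihilator of the slice module $M$, and then cites \cite[Proposition~3.8]{SY6} to see that $B$ is tilted (with $M$ a tilting $B$-module and $\End_B M$ hereditary), followed by \cite[Theorem~2]{SY6} for the socle equivalence $A \sim \widehat{B}/(\varphi\nu_{\widehat{B}})$. This is more concrete than the covering-theoretic picture you sketch, and it is what \cite{SY6} actually provides.

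There is one genuine technical point you have not identified. The results in \cite{SY6} that the paper invokes require a \emph{double} $\tau_A$-rigid stable slice, i.e.\ both $\Hom_A(M,\tau_A M)=0$ and $\Hom_A(\tau_A^{-1}M,M)=0$, whereas the hypothesis of Theorem~\ref{th:B} only gives the first vanishing. The paper bridges this gap with a short argument: regularity of the slice forces the injective envelope of $\tau_A M$ and the projective cover of $\tau_A^{-1}M$ to factor through powers of $M$, producing a monomorphism $\tau_A M \hookrightarrow M^r$ and an epimorphism $M^s \twoheadrightarrow \tau_A^{-1}M$; from these one reads off that $\Hom_A(M,\tau_A M)=0$ forces $\Hom_A(\tau_A^{-1}M,M)=0$. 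Your proposal does not flag this passage from one-sided to two-sided rigidity, and without it the citation of \cite{SY6} does not quite fire. Your representation-infiniteness argument for $A'$ via preservation of representation type under socle equivalence is fine; the paper phrases it as ``$B$ is not of Dynkin type'', which amounts to the same thing.
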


This theorem entails the following interesting corollary.

\begin{maincorollary}
\label{cor:C}
Let $A$ be a basic and connected
selfinjective algebra admitting an acyclic generalised
standard Auslander-Reiten component.
Then the representation dimension of $A$ equals three.
\end{maincorollary}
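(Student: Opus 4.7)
The plan is to reduce Corollary~\ref{cor:C} to Theorem~\ref{th:B} by exhibiting a $\tau_A$-rigid stable slice inside the given acyclic generalised standard Auslander--Reiten component $\mathcal{C}$.

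I would first observe that $A$ must be representation-infinite. Indeed, a basic, connected, non-semisimple, representation-finite selfinjective algebra has every indecomposable non-projective module $\tau_A$-periodic, so every component of its Auslander--Reiten quiver contains oriented cycles; the existence of an acyclic component therefore forces representation-infiniteness, and the semisimple case is trivial and may be excluded.

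Next I would invoke the known shape theorems for acyclic generalised standard Auslander--Reiten components of selfinjective algebras: such a component $\mathcal{C}$ has the form of (a suitable deletion of) a translation quiver $\mathbb{Z}\Delta$ for some acyclic valued quiver $\Delta$, with at most finitely many projective-injective vertices. Inside $\mathcal{C}$, choose a full connected translation subquiver $\Sigma$ meeting each $\tau_A$-orbit in exactly one vertex and staying far enough from the projective-injectives that it lies entirely in the stable part of $\mathcal{C}$; such a section exists precisely because $\mathcal{C}$ is infinite and acyclic, so the $\tau_A$-orbits are infinite. Because $\mathcal{C}$ is generalised standard, every nonzero morphism in $\mod A$ between indecomposables of $\mathcal{C}$ is a finite sum of compositions of irreducible maps within $\mathcal{C}$. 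Since $\mathcal{C}$ is acyclic and $\Sigma$ is a cross-section, no such composition can reach from any $X \in \Sigma$ to $\tau_A Y$ for any $Y \in \Sigma$. Hence $\Hom_A(X, \tau_A Y) = 0$ for all $X, Y \in \Sigma$, so $\Sigma$ is a $\tau_A$-rigid stable slice in the sense of \cite{SY6}, and Theorem~\ref{th:B} yields $\repdim A = 3$.

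The main obstacle is precisely matching the chosen subquiver $\Sigma$ to the formal axioms of a stable slice of \cite{SY6}, and verifying that generalised standardness together with acyclicity really does preclude all morphisms of the form $X \to \tau_A Y$ for $X, Y$ in $\Sigma$. This amounts to careful bookkeeping about the placement of the finitely many projective-injective vertices in $\mathcal{C}$ and about which $\tau_A$-orbits they occupy; once this is done, no additional ingredient beyond Theorem~\ref{th:B} is needed.
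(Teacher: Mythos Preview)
Your proposal is correct and follows essentially the same route as the paper: show that an acyclic generalised standard component $\mathcal{C}$ contains a $\tau_A$-rigid (regular) stable slice, deduce that $A$ is representation-infinite, and then apply Theorem~\ref{th:B}. The paper's proof is considerably more terse---it simply asserts that $\mathcal{C}$ is infinite and admits such a slice ``because it contains only finitely many projective modules''---whereas you spell out the construction of the section $\Sigma$ and the reason generalised standardness plus acyclicity kill the relevant $\Hom$-spaces; but the underlying argument is the same.
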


We now describe the contents of the paper.
After a preliminary Section~\ref{sec:preliminary}
in which we fix the notation and recall facts
on the representation dimension and socle equivalence,
we prove our Theorem~\ref{th:A} in Section~\ref{sec:invariance}.
Section~\ref{sec:selfinjective} is devoted to the application to
the selfinjective algebras of tilted type, including
Theorem~\ref{th:B} and Corollary~\ref{cor:C}.
Finally, Section~\ref{sec:examples} consists of illustrative examples.

\section{Representation dimension and socle equivalence}
\label{sec:preliminary}

\subsection{Notation.}
Throughout this paper,
$K$ denotes an arbitrary (commutative) field.
By an algebra $A$ is meant a basic, connected,
associative finite dimensional $K$-algebra.
Modules are finitely generated right $A$-modules,
and we denote by $\mod A$ their category.
For a module $M$, we denote by $\ell(M)$ its
composition length.
The notation $\add M$ stands for the additive
full subcategory of $\mod A$ having as objects
the direct sums of direct summands of $M$.
Given a full subcategory $\cC$ of $\mod A$,
we sometimes write $M \in \cC$ to express that
$M$ is an object in $\cC$.

We use freely standard results of representation theory,
for which we refer to \cite{ASS,ARS,SY5,SY7}.

\subsection{Representation dimension.}
The notion of representation dimension was introduced
in \cite{Au}.
It is defined as follows.
\begin{definition*}
Let $A$ be a non-semisimple algebra.
Its \emph{representation dimension} $\repdim A$
is the infinimum of the global dimensions of the algebras
$\End M$, where $M$ ranges over all $A$-modules
which are at the same time generators and cogenerators
of $\mod A$.
\end{definition*}

If $M$ is a generator-cogenerator of $\mod A$ for which
$\repdim A = $ \newline $\gldim \End M$, then
$M$ is called an \emph{Auslander generator} for $\mod A$.

For instance, if $A$ is a selfinjective algebra, then the module
$A_A$ is at the same time a generator and a cogenerator
of $\mod A$.
In fact, in this case, an arbitrary generator-cogenerator $M$
of $\mod A$ can be assumed to be of the form $M = A \oplus N$,
where the module $N_A$ has no projective direct summand.

In order to give a criterion allowing to compute the
representation dimension, we need to recall a few definitions
and facts.

Let $M$ be a fixed $A$-module.
Given a module $X$, a morphism
$f_0 : M_0 \to X$
with $M_0 \in \add M$ is called a \emph{right $\add M$-approximation}
of $X$ provided the induced morphism
\[
  \Hom_A(M,f_0) : \Hom_A(M,M_0) \to \Hom_A(M,X)
\]
is surjective.
Note that, if $M$ generates $X$, then any right $\add M$-approximation
of $X$ is surjective.

A right $\add M$-approximation $f_0 : M_0 \to X$
is called \emph{right minimal}
if any morphism  $g : M_0 \to X$
such that $f_0 g = f_0$ is an isomorphism
\[
  \xymatrix@C=6pc@R=1.25pc{
   M_0 \ar[r]^{f_0} \ar@{->}[d]_g & X \ar@{=}[d] \\
   M_0 \ar[r]^{f_0}  & X \, . \!\!\! \\
  }
\]

It is a \emph{right minimal $\add M$-approximation}
of $X$ if it is a right $\add M$-approximation
of $X$ and it is right minimal.

It turns out that any right $\add M$-approximation
admits a direct summand which is a right minimal $\add M$-approximation.

\begin{lemma}
\label{lem:2.1}
Let $M,X$ be modules and $f_1 : M_1 \to X$,
with $M_1 \in \add M$, be a right $\add M$-approximation of $X$.
Then
\begin{enumerate}[(a)]
 \item
  There exists a direct summand $M_0$ of $M_1$ such that
  $f_0 = f_1|_{M_0} : M_0 \to X$
  is a right minimal $\add M$-approximation.
 \item
  For any right minimal $\add M$-approximation
  $f_0 : M_0 \to X$, there exists a section  $s : M_0 \to M_1$
  such that $f_1 s = f_0$.
\end{enumerate}
\end{lemma}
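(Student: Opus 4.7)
The plan is to handle (a) by a minimality argument combined with Fitting's lemma, and then derive (b) by a short round-trip argument using the defining property of approximations together with right minimality.

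For part (a), I would consider the family $\mathcal{S}$ of direct summands $N$ of $M_1$ such that the restriction $f_1|_N : N \to X$ is still a right $\add M$-approximation of $X$. This family is non-empty since it contains $M_1$ itself, and among its elements I would pick $M_0$ of minimal composition length, writing $M_1 = M_0 \oplus M_0^{\perp}$ and $f_0 = f_1|_{M_0}$. To show $f_0$ is right minimal, I would take any $g : M_0 \to M_0$ with $f_0 g = f_0$. Since $\End_A(M_0)$ is finite dimensional, Fitting's lemma yields a decomposition $M_0 = M_0' \oplus M_0''$ on which $g$ acts as an automorphism and nilpotently, respectively. Iterating $f_0 = f_0 g^n$ for $n$ large enough that $g^n|_{M_0''} = 0$ gives $f_0|_{M_0''} = 0$, so $f_0$ factors through the projection onto $M_0'$.

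The key observation is then that $f_0|_{M_0'} : M_0' \to X$ is still a right $\add M$-approximation: any morphism $M \to X$ lifts to some $M \to M_0$ via $f_0$, and composing that lift with the projection $M_0 \twoheadrightarrow M_0'$ produces the required lift through $f_0|_{M_0'}$, since $f_0$ vanishes on $M_0''$. Minimality of $\ell(M_0)$ then forces $M_0'' = 0$, so $g$ is an isomorphism, proving right minimality.

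For part (b), given any right minimal $\add M$-approximation $f_0 : M_0 \to X$, the approximation property of $f_1$ applied to $f_0$ (noting $M_0 \in \add M$) produces $s : M_0 \to M_1$ with $f_1 s = f_0$, and symmetrically the approximation property of $f_0$ applied to $f_1$ produces $t : M_1 \to M_0$ with $f_0 t = f_1$. Then $f_0(ts) = f_1 s = f_0$, so right minimality of $f_0$ makes $ts$ an automorphism of $M_0$; composing $s$ with the appropriate inverse shows that $s$ is a section, as required.

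There is no substantial obstacle in this argument: it is the standard existence/uniqueness package for minimal approximations. The only point that needs mild care is the verification that after removing the nilpotent Fitting summand $M_0''$ in (a), the approximation property genuinely descends to $M_0'$; this works precisely because $f_0$ is forced to vanish on $M_0''$, and without that step the minimality argument would have no teeth.
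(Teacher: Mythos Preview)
Your proposal is correct. Part (b) is essentially identical to the paper's argument: both obtain morphisms $s : M_0 \to M_1$ and $r : M_1 \to M_0$ (your $t$) from the approximation property, then use right minimality of $f_0$ to conclude that $rs$ is an isomorphism, whence $s$ is a section.

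For part (a) the paper does not give an argument at all; it simply cites \cite[Theorem~I.2.2]{ARS}. Your direct proof via a minimal-length summand plus Fitting's lemma is the standard self-contained route to that cited result, and it is carried out correctly: the key point that $f_0$ vanishes on the nilpotent Fitting summand $M_0''$, so that the approximation property descends to $M_0'$, is exactly what is needed and you flag it explicitly. The only thing to keep straight (and you implicitly use it) is that the Fitting decomposition $M_0 = M_0' \oplus M_0''$ is an $A$-module decomposition and that $M_0'$ is then a direct summand of $M_1$, so it legitimately belongs to your family $\mathcal{S}$; both points are immediate. Your version has the advantage of being self-contained, while the paper's citation keeps the exposition short since the result is classical.
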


\begin{proof}
(a) This is just \cite[Theorem~I.2.2]{ARS}.

\smallskip

(b)
Because $M_0, M_1$ are right $\add M$-approximations,
there exist morphisms
$s : M_0 \to M_1$
and
$r : M_1 \to M_0$
making the following diagram commutative
\[
  \xymatrix@C=6pc@R=1.25pc{
   M_0 \ar[r]^{f_0} \ar@{->}[d]_s & X \ar@{=}[d] \\
   M_1 \ar[r]^{f_1} \ar@{->}[d]_r & X \ar@{=}[d] \\
   M_0 \ar[r]^{f_0}  & X \, . \!\!\! \\
  }
\]
Because $f_0$ is minimal, $r s$  is an isomorphism.
Therefore, $s$ is a section.
\end{proof}

An exact sequence of the form
\[
  0 \to
  M_d \xrightarrow{f_d}
  M_{d-1} \to
  \dots \to
  M_1 \xrightarrow{f_1}
  M_0 \xrightarrow{f_0}
  X \to
  0
\]
with all $M_i$ lying in $\add M$ is called a
\emph{right $\add M$-approximation resolution}
of $X$ with length $d$ provided the induced sequence
\[
  0 \to
  \Hom_A(M,M_d) \to
  \Hom_A(M,M_{d-1}) \to
  \dots \to
  \Hom_A(M,M_0) \to
  \Hom_A(M,X) \to
  0
\]
is exact.
It is called a
\emph{right minimal $\add M$-approximation resolution}
of $X$ if moreover each of the morphisms
$f_i : M_i \to \Im f_i$,
with $i \geqslant 0$,
is right minimal.

The following statement is a consequence of
Lemma~\ref{lem:2.1}.

\begin{corollary}
\label{cor:2.2}
Let $M,X$ be given modules and
\[
  0 \to
  N_d \xrightarrow{g_d}
  N_{d-1} \to
  \dots \to
  N_1 \xrightarrow{g_1}
  N_0 \xrightarrow{g_0}
  X \to
  0
\]
a right $\add M$-approximation resolution of $X$.
Then there exists a right minimal $\add M$-approximation
resolution of $X$
\[
  0 \to
  M_d \xrightarrow{f_d}
  M_{d-1} \to
  \dots \to
  M_1 \xrightarrow{f_1}
  M_0 \xrightarrow{f_0}
  X \to
  0
\]
which is a direct summand of the first one.
\end{corollary}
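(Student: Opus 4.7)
The plan is to proceed by induction on the length $d$ of the resolution, using Lemma~\ref{lem:2.1} at the right-hand end to peel off a contractible direct summand and then applying the induction hypothesis to the shorter right $\add M$-approximation resolution of $\Ker f_0$ that remains.

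For the inductive step, I would first apply Lemma~\ref{lem:2.1}(a) to $g_0 : N_0 \to X$, obtaining a decomposition $N_0 = M_0 \oplus N_0'$ such that $f_0 := g_0|_{M_0}$ is a right minimal $\add M$-approximation of $X$. Since $N_0' \in \add M$, the restriction $g_0|_{N_0'}$ factors through $f_0$ as $f_0 \circ k$ for some $k : N_0' \to M_0$, and the automorphism of $M_0 \oplus N_0'$ sending $(m,n') \mapsto (m - k(n'), n')$ converts $g_0$ into $(f_0, 0)$. After this change of basis, $\Ker g_0 = \Ker f_0 \oplus N_0'$. Exactness of $\Hom_A(M,-)$ on the modified resolution, combined with the fact that $N_0' \in \add M$ (so that $\Hom_A(N_0',-)$ also turns the sequence into an exact one), then allows one to lift the inclusion $N_0' \hookrightarrow N_0$ through $g_1$, producing a section $N_0' \to N_1$ and a decomposition $N_1 = N_1'' \oplus N_0'$. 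An analogous change of basis on $N_1$ makes $g_1$ block-diagonal, of the form $a \oplus \mathrm{id}_{N_0'}$ with $a : N_1'' \to M_0$ landing in $\Ker f_0$. The relation $g_1 g_2 = 0$ forces the component of $g_2$ into $N_0'$ to vanish, so the contractible complex $0 \to N_0' \xrightarrow{\mathrm{id}} N_0' \to 0$ (concentrated in positions $0$ and $1$) splits off as a direct summand of the whole sequence.

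What remains is a right $\add M$-approximation resolution
\[
  0 \to N_d \to \cdots \to N_2 \to N_1'' \xrightarrow{a} M_0 \xrightarrow{f_0} X \to 0
\]
in which $f_0$ is right minimal; its truncation at $M_0$ is then a right $\add M$-approximation resolution of $\Ker f_0$ of length $d-1$. Applying the induction hypothesis to this truncation and splicing the resulting right minimal resolution back with $f_0$ produces the sought-after right minimal $\add M$-approximation resolution of $X$ as a direct summand of the original. I expect the main technical obstacle to be verifying that the pair of changes of basis are compatible with the differentials at adjacent positions and preserve the exactness of $\Hom_A(M,-)$, so that the peeling procedure can be iterated cleanly all the way down the resolution; the key observation that carries the argument is that every redundant summand $N_0'$ discovered via Lemma~\ref{lem:2.1}(a) admits a canonical compatible lift into $N_1$ by the $\add M$-exactness hypothesis.
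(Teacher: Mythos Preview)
Your argument is correct. The two proofs use Lemma~\ref{lem:2.1} in the same essential way but organise the induction differently.

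The paper does not split off contractible summands. Instead, it builds the minimal resolution and the section into the given one simultaneously, step by step along the resolution: at each stage it takes a right minimal $\add M$-approximation $p_i : M_i \to Y_{i-1}$ of the kernel $Y_{i-1} = \Ker f_{i-1}$ already produced, and uses Lemma~\ref{lem:2.1}(b) together with the approximation property of $q_i : N_i \to Z_{i-1}$ to obtain maps $s_i : M_i \to N_i$ and $r_i : N_i \to M_i$ fitting into a three-row commutative ladder; minimality of $p_i$ then forces $s_i$ to be a section. The kernels $Y_i, Z_i$ are compared directly via induced sections $u_i : Y_i \to Z_i$, and the process continues.

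Your route is the ``Gaussian elimination'' variant: you identify the redundant summand $N_0'$, lift it into $N_1$ via the $\add M$-exactness hypothesis, and peel off the contractible $N_0' \xrightarrow{\mathrm{id}} N_0'$ before invoking the induction hypothesis on the length~$d$. This is slightly more modular---the contractible-splitting step is a self-contained lemma one could reuse---whereas the paper's argument is a single explicit construction that exhibits the commuting sections $s_i$ all at once without ever naming the complements $N_i'$. Both are standard; neither is materially shorter or more general than the other here.
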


\begin{proof}
We construct the second sequence by induction.
Because of Lemma~\ref{lem:2.1},
there exists a right minimal $\add M$-approximation
$f_0 : M_0 \to X$ and two morphisms
$s_0 : M_0 \to N_0$,
$r_0 : N_0 \to M_0$
such that the right squares of the following diagram commute
\[
  \xymatrix@C=2pc@R=1.25pc{
   0 \ar[r] & Y_0 \ar[rr]^{j_0} \ar[d]^{u_0} &&
                     M_0 \ar[rr]^{f_0} \ar[d]^{s_0} &&
                     X \ar[r] \ar@{=}[d] & 0 \\
   0 \ar[r] & Z_0 \ar[rr]^{i_0} \ar[d]^{v_0} &&
                     N_0 \ar[rr]^{g_0} \ar[d]^{r_0} &&
                     X \ar[r] \ar@{=}[d] & 0 \\
   0 \ar[r] & Y_0 \ar[rr]^{j_0} &&
                     M_0 \ar[rr]^{f_0} &&
                     X \ar[r] & 0 \, . \!\!\! \\
  }
\]
Letting $Y_0, Z_0$ be respectively the kernels of $f_0, g_0$
and $i_0,j_0$
the inclusion morphisms, one gets $u_0,v_0$ by
passing to the kernels.
Because $f_0$ is minimal, $r_0 s_0$  is an isomorphism,
hence so is $v_0 u_0$.
In particular, $s_0$ and $u_0$ are sections.
Also $Z_0 = \Im g_1$,
and so there exists an epimorphism
$q_1 : N_1 \to Z_0$ such that  $g_1 = i_0 q_1$.

Again,  Lemma~\ref{lem:2.1}
yields a right minimal $\add M$-approximation
$p_1 : M_1 \to Y_0$
and morphisms
$s_1 : M_1 \to N_1$,
$r_1 : N_1 \to M_1$
and
$t_1 : M_1 \to M_1$
such that
the right squares of the following diagram commute
\[
  \xymatrix@C=2pc@R=1.25pc{
   0 \ar[r] & Y_1 \ar[rr]^{j_1} \ar[d]^{u_1} &&
                     M_1 \ar[rr]^{p_1} \ar[d]^{s_1} &&
                     Y_0 \ar[r] \ar[d]^{u_0} & 0 \\
   0 \ar[r] & Z_1 \ar[rr]^{i_1} \ar[d]^{v_1} &&
                     N_1 \ar[rr]^{q_1} \ar[d]^{r_1} &&
                     Y_0 \ar[r] \ar[d]^{v_0} & 0 \\
   0 \ar[r] & Y_1 \ar[rr]^{j_1} \ar[d]^{w_1} &&
                     M_1 \ar[rr]^{p_1} \ar[d]^{t_1} &&
                     Y_0 \ar[r] \ar[d]^{(v_0 u_0)^{-1}} & 0 \\
   0 \ar[r] & Y_1 \ar[rr]^{j_1} &&
                     M_1 \ar[rr]^{p_1} &&
                     Y_0 \ar[r] & 0  \, . \!\!\! \\
  }
\]
Letting $Y_1, Z_1$ be respectively the kernels of $p_1, q_1$
and $i_1,j_1$
the inclusion morphisms, one gets $u_1,v_1,w_1$ by
passing to the kernels.
Because $p_1$ is right minimal, $t_1 r_1 s_1$ and $w_1 v_1 u_1$
are isomorphisms.
Therefore $s_1$ and $u_1$ are sections
while $t_1$, $w_1$ are retractions,
and hence isomorphisms.
Moreover, we have
\[
  g_1 s_1
   = i_0 q_1 s_1
   = i_0 u_0 p_1
   = s_0 j_0 p_1 .
\]
Setting
$f_1 = j_0 p_1 : M_1 \to M_0$,
we obtain the second morphism
in the required right minimal $\add M$-approximation
resolution.
Continuing in this way, we construct the wanted right
minimal $\add M$-approximation
resolution and sections  $s_i : M_i \to N_i$
such that the following diagram commutes
\[
  \xymatrix@C=2pc@R=1.25pc{
   0 \ar[r] & M_d \ar[r]^{f_d} \ar[d]^{s_d} &
                     M_{d-1} \ar[r]^{f_{d-1}} \ar[d]^{s_{d-1}} &
                    \cdots \ar[r] &
                     M_{1} \ar[r]^{f_{1}} \ar[d]^{s_{1}} &
                     M_{0} \ar[r]^{f_{0}} \ar[d]^{s_{0}} &
                     X \ar[r] \ar@{=}[d] &
                    0 \\
   0 \ar[r] & N_d \ar[r]^{g_d} &
                     N_{d-1} \ar[r]^{g_{d-1}} &
                    \cdots \ar[r] &
                     N_{1} \ar[r]^{g_{1}} &
                     N_{0} \ar[r]^{g_{0}} &
                     X \ar[r] &
                    0 \, . \!\!\! \\
  }
\]
\end{proof}

We need essentially the following characterisation
of the representation dimension for which we refer to
\cite{CP},
\cite{EHIS}.

\begin{theorem}
\label{th:2.3}
Let $A$ be an algebra.
Then $\repdim A \leqslant d+2$
if and only if there exists a generator-cogenerator $M$
such that every $A$-module $X$ has a right $\add M$-approximation
resolution
of length $d$.
\end{theorem}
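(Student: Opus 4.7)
The plan is to pass to the endomorphism algebra $\Lambda = \End M$ and to translate the problem into one about projective resolutions over $\Lambda$ via the functor $F = \Hom_A(M,-)$. Three standard facts are used throughout: $F$ restricts to an equivalence between $\add M$ and the full subcategory of finitely generated projective $\Lambda$-modules; $F$ is left exact; and since $M$ is a generator, $F(X)=0$ implies $X=0$. The argument splits into two equivalences: (a) $X$ admits a right $\add M$-approximation resolution of length $d$ if and only if $\operatorname{pd}_\Lambda F(X) \leq d$; and (b) $\gldim \Lambda \leq d+2$ if and only if $\operatorname{pd}_\Lambda F(X) \leq d$ for every $X \in \mod A$. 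Combining these for a suitable generator-cogenerator $M$ yields the theorem.

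For (a), applying $F$ to a right $\add M$-approximation resolution of length $d$ produces, by definition, an exact sequence of $\Lambda$-modules with all terms except $F(X)$ projective, hence a projective resolution of $F(X)$ of length $d$. Conversely, iteratively choose right minimal $\add M$-approximations $f_0 : M_0 \to X$, $f_1 : M_1 \to \Ker f_0$, and so on, whose existence is granted by Lemma~\ref{lem:2.1}. Since right minimal $\add M$-approximations correspond under $F$ to projective covers in $\mod \Lambda$, this procedure produces a minimal projective resolution of $F(X)$. If $\operatorname{pd}_\Lambda F(X) \leq d$ then the $(d+1)$-th syzygy in this minimal resolution vanishes, which translates to $F(K)=0$ for the corresponding kernel $K$ in $\mod A$; the generator property forces $K=0$, so $f_d$ becomes an isomorphism onto its target, and truncation gives the desired exact sequence in $\mod A$.

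For (b), the ``if'' direction takes any $\Lambda$-module $N$, lifts a projective presentation through the equivalence $\add M \simeq \operatorname{proj} \Lambda$ to a morphism $f : M_1 \to M_0$ in $\add M$ with $N = \operatorname{Coker} F(f)$, and uses left exactness of $F$ to obtain an exact sequence $0 \to F(\Ker f) \to F(M_1) \to F(M_0) \to N \to 0$; the hypothesis $\operatorname{pd}_\Lambda F(\Ker f) \leq d$ then yields $\operatorname{pd}_\Lambda N \leq d+2$. For the ``only if'' direction, the cogenerator property gives for every $X \in \mod A$ an exact sequence $0 \to X \to L^0 \to L^1$ with $L^0,L^1 \in \add M$; left exactness of $F$ then exhibits $F(X)$ as the second syzygy in a projective resolution of a $\Lambda$-module whose projective dimension is at most $d+2$, forcing $\operatorname{pd}_\Lambda F(X) \leq d$.

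The main technical obstacle is the asymmetry caused by $F$ being only left exact: approximation resolutions live on the $A$-side, where $F$ does not preserve cokernels, while projective resolutions must be controlled on the $\Lambda$-side, where an arbitrary module $N$ need not be of the form $F(X)$. Both difficulties are bridged by systematic use of the equivalence $\add M \simeq \operatorname{proj} \Lambda$, which lifts projective presentations and covers back to $\mod A$, and by the cogenerator property of $M$, which contributes precisely the two extra degrees that separate $\operatorname{pd}_\Lambda F(X)$ from $\gldim \Lambda$.
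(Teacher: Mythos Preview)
The paper does not supply its own proof of Theorem~\ref{th:2.3}; it simply refers the reader to \cite{CP} and \cite{EHIS} for this characterisation. Your argument is correct and is essentially the standard one found in those references: pass to $\Lambda=\End_A M$ via $F=\Hom_A(M,-)$, use the equivalence $\add M\simeq\operatorname{proj}\Lambda$ to identify (minimal) right $\add M$-approximation resolutions with (minimal) projective resolutions of $F(X)$, and invoke the generator and cogenerator hypotheses on $M$ to bridge the gap of two between $\operatorname{pd}_\Lambda F(X)$ and $\gldim\Lambda$. The only cosmetic point worth tightening is that Lemma~\ref{lem:2.1} gives minimality once an approximation exists; existence itself follows from the functorial finiteness of $\add M$, which you use implicitly.
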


Because of Corollary~\ref{cor:2.2},
we may reformulate this theorem by saying that
$\repdim A \leqslant d+2$ if and only if
there exists a generator-cogenerator $M$ such that
every $A$-module $X$ has a right minimal
$\add M$-approximation resolution of length $d$.

\subsection{Socle equivalence.}
Let $A$ and $A'$ be basic and connected
selfinjective algebras.
Then $A$ and $A'$ are called \emph{socle equivalent}
if the quotient algebras $A/\soc A$ and $A'/\soc A'$
are isomorphic.
Recall indeed that the socle of a selfinjective algebra
is a two-sided ideal, see \cite[Corollary~IV.6.14]{SY5}.

For simplicity of notation, if $A$, $A'$ are socle equivalent,
then we treat the isomorphism $A/\soc A \cong A'/\soc A'$
as an identification, that is, we always assume that
$A/\soc A = A'/\soc A'$.

If $A$ and $A'$ are socle equivalent, then they have
the same ordinary quiver.
They also have very similar Auslander-Reiten quivers.
Indeed, the indecomposable nonprojective
$A$-modules coincide with the indecomposable
nonprojective $A'$-modules.
Furthermore, if $P$ is an indecomposable projective
$A$-module, then there exists an almost split sequence
\[
  0 \to \rad P \to
  P \oplus (\rad P / \soc P)
  \to P / \soc P \to 0
\]
in $\mod A$.
The $A$-modules
$\rad P$,
$\rad P / \soc P$
and
$P / \soc P$
are clearly annihilated by the socle of $A$,
so they are $A/\soc A$-modules.
For the same reason, $P$ is not an $A/\soc A$-module.
Moreover, the irreducible morphisms
$\rad P \to \rad P / \soc P \to P / \soc P$
in $\mod A$,
remain irreducible in $\mod (A/\soc A)$,
see \cite[p.~186]{ARS}.
As an $A/\soc A$-module, $P/\soc P$
is indecomposable projective, while $\rad P$
is the injective envelope of $\soc P$.
Finally, because $A,A'$ are socle equivalent
and so have the same ordinary quiver,
there exists a unique indecomposable projective
$A'$-module $P'$ such that
$P/\soc P = P'/\soc P'$.
The relation between $P$ and $P'$ is
described in the following lemma.

\begin{lemma}
\label{lem:2.4}
Let $A$, $A'$ be socle equivalent selfinjective algebras,
and $P_A$, $P'_{A'}$, be indecomposable projective modules
such that $P/\soc P = P'/\soc P'$.
Then we have:
\begin{enumerate}[(a)]
 \item
  The almost split sequence in $\mod A'$
  having $P'$ as summand of the middle term is
  \[
    0 \to \rad P \to
    P' \oplus (\rad P / \soc P)
    \to P / \soc P \to 0 .
  \]
 \item
  $\rad P = \rad P'$.
 \item
  $\soc P = \soc P'$.
 \item
  $\ell(P) = \ell(P')$.
 \item
  If
  $j : \rad P \to P$,
  $j' : \rad P' \to P'$
  and
  $p : P \to P/\soc P$,
  $p' : P' \to P'/\soc P'$
  are the canonical morphisms,
  then $p j = p' j'$.
\end{enumerate}
\end{lemma}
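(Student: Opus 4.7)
Under the identification $A/\soc A = A'/\soc A' =: \bar A$, the hypothesis $P/\soc P = P'/\soc P'$ means $P$ and $P'$ correspond to the same primitive idempotent $e$; lift $e$ compatibly to both algebras so that $P = eA$ and $P' = eA'$. My plan is to prove (b) first and then derive (a) and (c)--(e) from it as formal consequences.

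The modules $\rad P = e\rad A$ and $\rad P' = e\rad A'$ are both right $\bar A$-modules (since $\rad\cdot\soc = 0$ in a selfinjective algebra), and they share the common quotient
\[
  \rad P/\soc P \;=\; e\rad\bar A \;=\; \rad P'/\soc P'.
\]
By the discussion preceding the lemma, each of them is the injective envelope in $\mod\bar A$ of its (simple) socle, so proving (b) reduces to identifying $\soc P$ with $\soc P'$ as $\bar A$-modules. My approach is to analyse, for each vertex $k$ carrying a simple summand of $\soc(e\bar A)$, how a generator $x\in e\bar A e_k$ lifts to elements $\tilde x \in eAe_k$ and $\tilde x'\in eA'e_k$. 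Because $x\cdot\rad\bar A = 0$, the products $\tilde x\cdot\rad A$ and $\tilde x'\cdot\rad A'$ are forced to lie in the respective socles $e\soc A$ and $e\soc A'$, both of which are simple by selfinjectivity. Matching these two simples amounts to showing that the Nakayama permutations of $A$ and $A'$ agree at the vertex of $e$, which I would extract from the combinatorial data of outgoing arrows at $k$ in the common quiver; this finally yields $\rad P \cong \rad P'$ by the uniqueness of injective envelopes in $\mod\bar A$.

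Granting (b), part (a) follows by substitution: in $\mod A'$ the standard almost split sequence with $P'$ in the middle is
\[
  0\to\rad P'\to P'\oplus(\rad P'/\soc P')\to P'/\soc P'\to 0,
\]
and replacing $\rad P' = \rad P$, $\rad P'/\soc P' = \rad(P/\soc P) = \rad P/\soc P$ (the radical is intrinsic to the underlying $\bar A$-module), and $P'/\soc P' = P/\soc P$ recovers the claim. Taking the essential socle of the injective module $\rad P$ gives $\soc P = \soc(\rad P) = \soc(\rad P') = \soc P'$, which is (c); then (d) is immediate from $\ell(P) = \ell(\rad P)+1 = \ell(\rad P')+1 = \ell(P')$. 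Finally, (e) holds because $pj$ factors as the canonical surjection $\rad P\twoheadrightarrow\rad P/\soc P$ composed with the inclusion $\rad P/\soc P\hookrightarrow P/\soc P$; under the identifications $\rad P = \rad P'$ and $P/\soc P = P'/\soc P'$ this coincides with $p'j'$.

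The crux of the argument, and what I expect to be the main obstacle, is the identification $\soc P = \soc P'$ inside (b). Abstractly, two non-isomorphic simples could give injective envelopes in $\mod\bar A$ with isomorphic quotients modulo socle, so uniqueness of $\rad P$ from the prescribed quotient $\rad P/\soc P$ is not automatic; making this step watertight requires a genuine use of the hypothesis that $A$ and $A'$ are both selfinjective socle extensions of the \emph{same} $\bar A$, rather than merely that their socle quotients are abstractly isomorphic as algebras.
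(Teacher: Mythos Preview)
Your reordering---proving (b) first and deducing (a), (c)--(e)---is logically equivalent to the paper's order: the left end of the almost split sequence in $\mod A'$ containing $P'$ is $\rad P'$ by definition, so (a) and (b) are interchangeable once one substitutes $P'/\soc P'=P/\soc P$ and $\rad P'/\soc P'=\rad(P/\soc P)=\rad P/\soc P$. Your derivations of (a), (c), (d), (e) from (b) are correct and coincide with the paper's arguments essentially verbatim; in particular (e) via the factorisation $\rad P \twoheadrightarrow \rad P/\soc P \hookrightarrow P/\soc P$ is exactly what the paper does.

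Where you diverge is at the crux $\rad P\cong\rad P'$. The paper disposes of this in one line, asserting that (a) ``follows from the discussion before''; you instead try to produce an explicit argument via Nakayama permutations, and you are right that this is where the content lies. However, the strategy you sketch does not work as written. Starting from a vertex $k$ with $S_k$ a summand of $\soc(e\bar A)$ means you are looking at the \emph{second} socle layer of $P$, which bears no direct relation to the vertex $\sigma_A(e)$ you want to identify. Lifting a generator $x$ to $\tilde x\in eAe_k$ and observing $\tilde x\cdot\rad A\subseteq e\soc A$ is fine, but this only places the products $\tilde x\cdot\alpha$ (for arrows $\alpha$ out of $k$) inside the simple $e\soc A$; it does not tell you \emph{which} simple that is, because those products may all vanish, and even when some is nonzero there is no mechanism in your argument that compares the outcome in $A$ with the outcome in $A'$. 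The ``combinatorial data of outgoing arrows at $k$'' does not determine $\sigma_A(e)$ in general. So your honest final paragraph is on the mark: the uniqueness of an indecomposable injective $\bar A$-module with prescribed quotient modulo its socle is exactly the point at issue, and your sketch does not resolve it.
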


\begin{proof}
(a)
This follows from the discussion before using the hypothesis that
$P/\soc P = P'/\soc P'$.

\smallskip

(b)
This follows immediately from (a).

\smallskip

(c)
$\soc P = \soc (\rad P) = \soc (\rad P') = \soc P'$.

\smallskip

(d)
$\ell(P) = \ell (\rad P) + \ell (P/\soc P) - \ell (\rad P/\soc P) = \ell(P')$.

\smallskip

(e)
The morphism $p j$ can be rewritten as the composition of the canonical
morphisms
$\rad P \to \rad P / \soc P \to P / \soc P$.
Similarly,
$p' j'$ is the composition of the corresponding morphisms
in $\mod A'$.
Because the modules through which they pass are the same,
this implies that $p j = p' j'$.
\end{proof}

\section{Invariance of the representation dimension}
\label{sec:invariance}

Our objective in this section is to prove that two socle equivalent
selfinjective algebras have the same representation
dimension.
Throughout this section, $A$ and $A'$ denote two
selfinjective algebras such that
$A/\soc A = A'/\soc A'$.

\begin{proposition}
\label{prop:3.1}
Let
$0 \to Y \xrightarrow{\left(\begin{smallmatrix}f\\g\end{smallmatrix}\right)}
 N_0 \oplus P \xrightarrow{\left(\begin{smallmatrix}u& v\end{smallmatrix}\right)}
 X \to 0$
be a short exact sequence in $\mod A$, with $P$ projective
and $X,Y,N_0$ having no projective direct summand.
Let $P'$ be the projective $A'$-module such that
$P' / \soc P' = P / \soc P$.
Then there exists a short exact sequence in $\mod A'$
\[
  0 \to Y \xrightarrow{\left(\begin{smallmatrix}f\\g'\end{smallmatrix}\right)}
   N_0 \oplus P' \xrightarrow{\left(\begin{smallmatrix}u& v'\end{smallmatrix}\right)}
   X \to 0
   .
\]
\end{proposition}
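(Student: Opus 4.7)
The plan is to construct $g'$ and $v'$ directly from $g$ and $v$ by exploiting the two identifications provided by Lemma~\ref{lem:2.4}: $\rad P = \rad P'$ and $P/\soc P = P'/\soc P'$. First, I would establish the standard fact that, over a selfinjective algebra, any module $M$ with no projective summand satisfies $M \cdot \soc A = 0$. Indeed, if some $v \in Me_i$ had $v \cdot \soc A \neq 0$, then the right $A$-module map $e_iA \to M$ sending $e_i \mapsto v$ would be nonzero on the essential simple socle of $e_iA$, hence injective, and would split off $e_iA$ as a direct summand of $M$ by injectivity of $e_iA$. Applied to $X$, $Y$, $N_0$, this makes them modules over $A/\soc A = A'/\soc A'$, so that $f$ and $u$ are already $A'$-linear without any modification.

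To construct $g'$, I observe that $g: Y \to P$ cannot be surjective, since a surjection onto the projective $P$ would split and exhibit $P$ as a direct summand of $Y$. Because $P$ is local, this forces $g(Y) \subseteq \rad P$, yielding a factorisation $g = j\tilde g$ with $\tilde g : Y \to \rad P$; I then set $g' := j'\tilde g$, invoking $\rad P = \rad P'$. For $v'$, I use the identity $\soc P = P \cdot \soc A$ together with $X \cdot \soc A = 0$ to get $v(\soc P) = v(P) \cdot \soc A \subseteq X \cdot \soc A = 0$, so that $v$ factors as $v = \bar v \circ \pi$ for a unique $\bar v : P/\soc P \to X$, and I set $v' := \bar v \circ \pi'$. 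Both $g'$ and $v'$ are genuinely $A'$-linear: since $\rad A \cdot \soc A = 0$, the module $\rad P$ is annihilated by $\soc A$, and the same is true of $Y$, $P/\soc P$, $X$, so that $\tilde g$ and $\bar v$ are morphisms of $A/\soc A$-modules.

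For exactness of the new sequence, Lemma~\ref{lem:2.4}(e) gives $\pi j = \pi' j'$, hence
\[
v' g' \;=\; \bar v (\pi' j') \tilde g \;=\; \bar v (\pi j) \tilde g \;=\; v g ,
\]
so $uf + v'g' = uf + vg = 0$. Injectivity of $\binom{f}{g'}$ reduces to that of $\binom{f}{g}$: from $f(y) = 0 = g'(y) = j'\tilde g(y)$ one gets $\tilde g(y) = 0$ (as $j'$ is injective), whence $g(y) = 0$ and $y = 0$. For surjectivity of $(u, v')$, any $x = u(n) + v(p) \in X$ equals $u(n) + v'(p')$ where $p' \in P'$ is any lift of $\pi(p) \in P/\soc P = P'/\soc P'$. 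Exactness in the middle then follows from a dimension count, since $\dim_K P = \dim_K P'$ by Lemma~\ref{lem:2.4}(b) and (d). I expect the main obstacle is not any one of these verifications but the initial recognition that the two factorisations $g = j\tilde g$ and $v = \bar v \pi$ are \emph{forced} by the non-projectivity hypotheses on $Y$ and $X$; once those are in place, Lemma~\ref{lem:2.4} does essentially all of the remaining work.
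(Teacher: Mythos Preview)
Your construction of $g'$ and $v'$ is exactly the paper's: factor $g$ through $\rad P=\rad P'$ and $v$ through $P/\soc P=P'/\soc P'$, then use Lemma~\ref{lem:2.4}(e) to see that the new sequence is a complex. The difference lies in how exactness in the middle is established. The paper does a direct element chase, lifting an element of $\Ker(u\ v')$ back through the original sequence and correcting by an element of $\soc P=\soc P'$. You instead verify injectivity on the left and surjectivity on the right separately, and then invoke $\ell(P)=\ell(P')$ from Lemma~\ref{lem:2.4}(d) to conclude by a dimension count. This is a genuine shortcut: once the sequence is a complex with injective left map and surjective right map, equality of the alternating length sums forces exactness, and it avoids the somewhat delicate socle argument in the paper.

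One small imprecision: you write ``because $P$ is local'', but $P$ need not be indecomposable. The correct statement is that each component $Y\to P_i$ of $g$ into an indecomposable summand $P_i$ must fail to be surjective (else $P_i$ splits off $Y$), hence lands in $\rad P_i$; summing gives $g(Y)\subseteq\rad P$. The paper glosses over the same point. Your argument that $v(\soc P)=0$ via $\soc P=P\cdot\soc A$ and $X\cdot\soc A=0$ is in fact cleaner than the paper's ``$v$ cannot be injective'' reasoning, which also implicitly requires a component-wise check.
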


\begin{proof}
Because the morphism $g : Y \to P$ cannot be surjective,
its image lies in $\rad P$, and therefore we have a factorisation
\[
  \xymatrix@C=2pc@R=2pc{
   Y \ar[rr]^{\left(\begin{smallmatrix}f\\g\end{smallmatrix}\right)}
      \ar[rd]_(.4){\left(\begin{smallmatrix}f\\h\end{smallmatrix}\right)}
      && N_0 \oplus P \\
    & N_0 \oplus \rad P \ar[ru]_(.55){\left(\begin{smallmatrix}1&0\\0&j\end{smallmatrix}\right)}
  }
\]
where $j :\rad P \to P$ is the canonical inclusion and $g = j h$.

Because of Lemma~\ref{lem:2.4},
$\rad P = \rad P'$.
Let $j' : \rad P' \to P'$
be the canonical inclusion and set $g' = j' h$.
We get a composed morphism
\[
 \begin{pmatrix}f\\g'\end{pmatrix}
 =
 \begin{pmatrix}1&0\\0&j'\end{pmatrix}
 \begin{pmatrix}f\\h\end{pmatrix}
: Y \to
N_0 \oplus P'
\]
in $\mod A'$.
Because
$\big(\!\begin{smallmatrix}f\\g\end{smallmatrix}\!\big)$
is injective, so is
$\big(\!\begin{smallmatrix}f\\h\end{smallmatrix}\!\big)$,
hence so is
$\big(\!\begin{smallmatrix}f\\g'\end{smallmatrix}\!\big)$.

We now construct in a similar way a morphism
$N_0 \oplus P' \to X$.
Because the morphism $v : P \to X$ cannot be injective,
it factors through $P/\soc P$.
Therefore, we have a factorisation
\[
  \xymatrix@C=2pc@R=2pc{
   N_0 \oplus P \ar[rr]^{\left(\begin{smallmatrix}u&v\end{smallmatrix}\right)}
      \ar[rd]_(.4){\left(\begin{smallmatrix}1&0\\0&p\end{smallmatrix}\right)}
      && X \\
    & N_0 \oplus P/\soc P \ar[ru]_(.55){\left(\begin{smallmatrix}u&w\end{smallmatrix}\right)}
  }
\]
where $p : P \to P / \soc P$ is the canonical projection
and $v = w p$.

Now, $P / \soc P = P' / \soc P'$.
Thus we get a composed morphism
\[
 \begin{pmatrix}u&v'\end{pmatrix}
 =
 \begin{pmatrix}u&w\end{pmatrix}
 \begin{pmatrix}1&0\\0&p'\end{pmatrix}
: N_0 \oplus P' \to X
\]
in $\mod A'$,
where
$p' : P' \to  P' / \soc P'$
is the canonical projection and $v' = w p'$.
Because $(u\ \, v)$ is surjective, so is
$(u\ \, w)$, hence so is $(u\ \, v')$.

The construction is encoded in the following
commutative diagram
\[
  \xymatrix@C=1.5pc@R=1.25pc{
  0 \ar[r] &
   Y \ar[rr]^{\left(\begin{smallmatrix}f\\g\end{smallmatrix}\right)}
      \ar[rd]_(.4){\left(\begin{smallmatrix}f\\h\end{smallmatrix}\right)}
   \ar@{=}[ddd]
      && N_0 \oplus P  \ar[rr]^{\left(\begin{smallmatrix}u&v\end{smallmatrix}\right)}
      \ar[rd]_(.4){\left(\begin{smallmatrix}1&0\\0&p\end{smallmatrix}\right)}
      && X
   \ar@{=}[ddd]
  \ar[r] & 0
\\ &
    & N_0 \oplus \rad P \ar[ru]_(.55)
{\left(\begin{smallmatrix}1&0\\0&j\end{smallmatrix}\right)}
   \ar@{=}[d]
  && N_0 \oplus P/\soc P \ar[ru]_(.55)
{\left(\begin{smallmatrix}u&w\end{smallmatrix}\right)}
   \ar@{=}[d]
\\ &
    & N_0 \oplus \rad P' \ar[rd]^(.55)
{\left(\begin{smallmatrix}1&0\\0&j'\end{smallmatrix}\right)}
  && N_0 \oplus P'/\soc P' \ar[rd]^(.55)
{\left(\begin{smallmatrix}u&w\end{smallmatrix}\right)}
\\
  0 \ar[r] &
   Y \ar[rr]_{\left(\begin{smallmatrix}f\\g'\end{smallmatrix}\right)}
      \ar[ru]^(.4){\left(\begin{smallmatrix}f\\h\end{smallmatrix}\right)}
      && N_0 \oplus P'  \ar[rr]_{\left(\begin{smallmatrix}u&v'\end{smallmatrix}\right)}
      \ar[ru]^(.4){\left(\begin{smallmatrix}1&0\\0&p'\end{smallmatrix}\right)}
      && X
  \ar[r] & 0
\,.\!\!\!
  }
\]
We know that the upper sequence is exact, and we want
to prove that so is the lower sequence.

We have already proven that
$\big(\!\begin{smallmatrix}f\\g'\end{smallmatrix}\!\big)$
is injective, and that
$(u\ \, v')$ is surjective,
so we only need to check that
$\Ker (u\ \, v') = \Im \big(\!\begin{smallmatrix}f\\g'\end{smallmatrix}\!\big)$.
First, we have
\begin{align*}
  (u\ \, v') \begin{pmatrix}f\\g'\end{pmatrix}
  &= u f + v' g'
  = u f + w p' j' h
  = u f + w p j h
\\ &
  = u f + v g
  = (u\ \, v) \begin{pmatrix}f\\g\end{pmatrix}
  = 0 ,
\end{align*}
where we have used Lemma~\ref{lem:2.4}~(e).

In order to prove that
$\Ker (u\ \, v') \subseteq \Im \big(\!\begin{smallmatrix}f\\g'\end{smallmatrix}\!\big)$,
let $x \in N_0$ and $a \in P'$ be such that
$\big(\!\begin{smallmatrix}x\\a\end{smallmatrix}\!\big) \in \Ker (u\ \, v')$.
Because $p$ and $p'$ are surjective,
there exists $b \in P$ such that
$p(b) = p'(a)$, so
$\big(\!\begin{smallmatrix}x\\b\end{smallmatrix}\!\big) \in N_0 \oplus P$.
We have
\begin{align*}
  (u\ \, v) \begin{pmatrix}x\\b\end{pmatrix}
  &= u (x) + v(b)
  = u (x) + w p (b)
  = u (x) + w p' (a)
  = u (x) + v' (a)
\\ &
  = (u\ \, v') \begin{pmatrix}x\\a\end{pmatrix}
  = 0 ,
\end{align*}
so that
$\big(\!\begin{smallmatrix}x\\b\end{smallmatrix}\!\big) \in \Ker (u\ \, v)$.
Therefore, there exists $y_0 \in Y$ such that
\[
 \begin{pmatrix}x\\b\end{pmatrix}
  =
 \begin{pmatrix}f\\g\end{pmatrix}
 (y_0),
\]
that is, such that
$x = f(y_0)$
and
$b = g(y_0)$.

Now, we have
\begin{align*}
  p'\big(a - g'(y_0)\big)
  &= p'(a) - p' g'(y_0)
    = p'(a) - p'j'h(y_0)
    = p'(a) - pjh(y_0)
\\ &
    = p'(a) - p g(y_0)
    = p'(a) - p(b)
   = 0 .
\end{align*}
Therefore $a - g'(y_0) \in \Ker p' = \soc P'$.
Because of
Lemma~\ref{lem:2.4}~ (c),
$a - g'(y_0) \in \soc P$
and so
$p(a - g'(y_0)) = 0$ .
Therefore
\[
  (u\ \, v) \begin{pmatrix}0\\a - g'(y_0)\end{pmatrix}
 =
    (u\ \, w)
     \begin{pmatrix}1&0\\0&p\end{pmatrix}
     \begin{pmatrix}0\\a - g'(y_0)\end{pmatrix}
  = 0 .
\]
Exactness of the upper row yields a $c \in Y$ such that
\[
  \begin{pmatrix}0\\a - g'(y_0)\end{pmatrix}
 =
     \begin{pmatrix}f\\g\end{pmatrix} (c) .
\]
This means that $f(c) = 0$ and $g(c) = a - g'(y_0)$.
Because $a - g'(y_0) \in \soc P \subseteq \rad P$,
we have $g(c) = j h(c) \in \rad P$,
and therefore $j h (c) = h (c)$.
Hence $a = g'(y_0) + h(c)$.

Set $y = y_0 + c \in Y$.
Because $f(c) = 0$, we have
$f(y) = f(y_0) + f(c) = f(y_0) = x$.
On the other hand,
\[
  g'(y) = g'(y_0) + g'(c) = g'(y_0) + j' h(c) .
\]
Because $h(c) \in \rad P = \rad P'$,
we have $j' h(c) = h(c)$ and so
\[
  g'(y) = g'(y_0) + h(c) = a .
\]
We have proved that
$\big(\!\begin{smallmatrix}x\\a\end{smallmatrix}\!\big)
 = \big(\!\begin{smallmatrix}f\\g'\end{smallmatrix}\!\big) (y)
 \in \Im \big(\!\begin{smallmatrix}f\\g'\end{smallmatrix}\!\big)$,
as required.
\end{proof}

Let now $M$ be an Auslander generator for $\mod A$.
Because $A$ is selfinjective, we can assume that $M$
is of the form
\[
  M = N \oplus A ,
\]
where $N$ has no projective summands.
But then $N$ is also an $A'$-module.
We claim that
\[
  M' = N \oplus A'
\]
is an Auslander generator for $\mod A'$.
The first step in the proof is the following lemma.

\begin{lemma}
\label{lem:3.2}
Let
$0 \to Y \xrightarrow{\left(\begin{smallmatrix}f\\g\end{smallmatrix}\right)}
 N_0 \oplus P \xrightarrow{\left(\begin{smallmatrix}u& v\end{smallmatrix}\right)}
 X \to 0$
be an exact sequence in $\mod A$,
with $P$ projective,
$N_0 \in \add N$
and $X,Y$ having no projective direct summands.
Assume that $(u\ \, v)$ is a right $\add M$-approximation
in $\mod A$.
Then, in the corresponding exact sequence
\[
  0 \to Y \xrightarrow{\left(\begin{smallmatrix}f\\g'\end{smallmatrix}\right)}
   N_0 \oplus P' \xrightarrow{\left(\begin{smallmatrix}u& v'\end{smallmatrix}\right)}
   X \to 0
\]
in $\mod A'$,
the morphism $(u\ \, v')$ is a right $\add M'$-approximation
in $\mod A'$.
\end{lemma}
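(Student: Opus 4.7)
The strategy is to take an arbitrary morphism $\psi : Z \to X$ in $\mod A'$ with $Z \in \add M' = \add(N \oplus A')$, decompose $Z = N_1 \oplus Q'$ with $N_1 \in \add N$ and $Q'$ projective over $A'$, write $\psi = (\alpha\ \beta)$ with $\alpha : N_1 \to X$ and $\beta : Q' \to X$, and lift each component through $(u\ v')$ separately. The part involving $Q'$ is immediate by projectivity; the part involving $\alpha$ is the main work, since the hypothesis provides an approximation property only in $\mod A$, not in $\mod A'$.

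Because $N_1$ and $X$ have no projective summands, both are annihilated by the socle and are thus modules over the common quotient $A/\soc A = A'/\soc A'$; in particular $\alpha$ can be regarded as a morphism in $\mod A$. Applying the hypothesis that $(u\ v) : N_0 \oplus P \to X$ is a right $\add M$-approximation in $\mod A$, I obtain $\alpha_1 : N_1 \to N_0$ and $\alpha_2 : N_1 \to P$ with $u \alpha_1 + v \alpha_2 = \alpha$. The morphism $\alpha_1$ is between non-projective modules and therefore already lives in $\mod A'$; the genuine task is to replace $\alpha_2$ by a suitable $\alpha_2' : N_1 \to P'$ in $\mod A'$ yielding the same composite with $v'$.

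For this, note that since $N_1 \in \add N$ has no projective direct summand, no component of $\alpha_2$ can be surjective onto an indecomposable projective summand of $P$ (otherwise that summand would split off $N_1$). Hence $\Im \alpha_2 \subseteq \rad P$, and one can factor $\alpha_2 = j h$ for some $h : N_1 \to \rad P$, where $j : \rad P \to P$ is the canonical inclusion. Using $\rad P = \rad P'$ from Lemma~\ref{lem:2.4}(b), I set $\alpha_2' := j' h : N_1 \to P'$. Combining the factorisations $v = w p$ and $v' = w p'$ appearing in the proof of Proposition~\ref{prop:3.1} with the identity $p j = p' j'$ from Lemma~\ref{lem:2.4}(e) yields
\[
   v' \alpha_2' = w p' j' h = w p j h = v \alpha_2,
\]
and therefore $u \alpha_1 + v' \alpha_2' = u \alpha_1 + v \alpha_2 = \alpha$.

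For the $Q'$-component, projectivity of $Q'$ in $\mod A'$ together with the surjectivity of $(u\ v')$ already established in Proposition~\ref{prop:3.1} produces $\beta_1 : Q' \to N_0$ and $\beta_2 : Q' \to P'$ with $u \beta_1 + v' \beta_2 = \beta$. Assembling
\[
   \begin{pmatrix}\alpha_1 & \beta_1 \\ \alpha_2' & \beta_2\end{pmatrix} : N_1 \oplus Q' \longrightarrow N_0 \oplus P'
\]
gives the required factorisation of $\psi$ through $(u\ v')$, proving that $(u\ v')$ is a right $\add M'$-approximation. The delicate step is the transition from $\alpha_2$ to $\alpha_2'$: this is the only place where the socle equivalence is genuinely used, and it relies on the combination of $\rad P = \rad P'$ with the compatibility $p j = p' j'$ between the canonical inclusions and projections.
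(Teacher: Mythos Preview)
Your proof is correct and follows essentially the same argument as the paper: the paper reduces to an indecomposable summand $M'_0$ of $M'$ and then distinguishes the projective and non-projective cases, whereas you decompose $Z = N_1 \oplus Q'$ directly, but in both versions the projective part is handled by surjectivity of $(u\ v')$ and the non-projective part by factoring through $\rad P = \rad P'$ and invoking $p j = p' j'$ from Lemma~\ref{lem:2.4}(e). The computations are literally the same up to renaming $t_1, t_2, t_2^*$ as $\alpha_1, \alpha_2, h$.
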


\begin{proof}
Let $M'_0$ be an indecomposable direct summand of $M'$.
We claim that any morphism $f'_0 : M'_0 \to X$
lifts to $N_0 \oplus P'$.
Because  $M'_0$ is indecomposable, either
$M'_0$ is projective, in which case the statement is obvious,
or else $M'_0 \in \add N$.
In this latter case, $f'_0$ is also a morphism in $\mod A$.
Therefore there exists
$\big(\!\begin{smallmatrix}t_1\\t_2\end{smallmatrix}\!\big)
 : M'_0 \to N_0 \oplus P$
such that
$(u\ \, v) \big(\!\begin{smallmatrix}t_1\\t_2\end{smallmatrix}\!\big) = f'_0$.
Because $M'_0 \in \add N$,
the morphism $t_2 : M'_2 \to P$ cannot be surjective,
hence it factors through  $\rad P$.
That is, there exists $t_2^* : M'_2 \to \rad P$ such that
$t_2 = j t_2^*$ where $j$ is, as before,
the canonical inclusion.
But $\rad P = \rad P'$.
Letting $j' : \rad P' \to P'$ be the canonical inclusion,
we set $t'_2 = j' t_2^*$.
We claim that
$(u\ \, v') \big(\!\begin{smallmatrix}t_1\\t'_2\end{smallmatrix}\!\big) = f'_0$.
Indeed, denoting, as before,
by $p : P \to P / \soc P$ and
$p' : P' \to P' / \soc P'$
the canonical projections, we have
\begin{align*}
  (u\ \, v') \begin{pmatrix}t_1\\t'_2\end{pmatrix}
  &= u t_1 + v' t'_2
  = u t_1 + w p' j' t_2^*
  = u t_1 + w p j t_2^*
\\ &
  = u t_1 + v t_2
  = (u\ \, v) \begin{pmatrix}t_1\\t_2\end{pmatrix}
  = f'_0 ,
\end{align*}
where we have used Lemma~\ref{lem:2.4}~(e).
\end{proof}

\begin{lemma}
\label{lem:3.3}
In the notation of
Lemma~\ref{lem:3.2},
if $(u\ \, v)$ is minimal,
then so is $(u\ \, v')$.
\end{lemma}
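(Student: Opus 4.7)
The plan is to show that right-minimality transfers from $(u\ v)$ to $(u\ v')$ by transporting endomorphisms between $\mod A$ and $\mod A'$. Specifically, suppose $\phi \colon N_0 \oplus P' \to N_0 \oplus P'$ is an $A'$-endomorphism with $(u\ v')\phi = (u\ v')$; the aim is to show $\phi$ is an isomorphism. Write $\phi = \bigl(\begin{smallmatrix} \alpha & \beta \\ \gamma & \delta \end{smallmatrix}\bigr)$ in block form. I will construct a companion $A$-endomorphism $\tilde\phi \colon N_0 \oplus P \to N_0 \oplus P$ satisfying $(u\ v)\tilde\phi = (u\ v)$, invoke right-minimality of $(u\ v)$ to conclude $\tilde\phi$ is an isomorphism, and then transfer this back to $\phi$.

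The construction of $\tilde\phi = \bigl(\begin{smallmatrix} \alpha & \tilde\beta \\ \tilde\gamma & \tilde\delta \end{smallmatrix}\bigr)$ proceeds as follows. Since $N_0 \in \add N$ has no projective summand over $A$, it is annihilated by $\soc A = \soc A'$, so $\alpha$ makes sense simultaneously over both rings. The map $\beta \colon P' \to N_0$ factors through $p' \colon P' \to P'/\soc P'$ (as $N_0 \cdot \soc A' = 0$), say $\beta = \bar\beta p'$; using the identification $P/\soc P = P'/\soc P'$ I set $\tilde\beta = \bar\beta p$. The image of $\gamma \colon N_0 \to P'$ lies in $\rad P'$ (otherwise it would generate $P'$, forcing $\soc P' = 0$), yielding $\gamma = j' \gamma_0$ with $\gamma_0 \colon N_0 \to \rad P' = \rad P$, and I set $\tilde\gamma = j \gamma_0$. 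Finally, $\delta \colon P' \to P'$ induces $\bar\delta$ on $P/\soc P = P'/\soc P'$ via $\bar\delta p' = p' \delta$, and projectivity of $P$ provides a lift $\tilde\delta \colon P \to P$ with $p \tilde\delta = \bar\delta p$. Checking $(u\ v)\tilde\phi = (u\ v)$ then reduces to the two identities $u\alpha + v\tilde\gamma = u$ and $u\tilde\beta + v\tilde\delta = v$, both of which follow from the corresponding identities for $\phi$ together with the key relation $pj = p'j'$ of Lemma~\ref{lem:2.4}(e) and the factorisations $v = wp$, $v' = wp'$. Minimality of $(u\ v)$ then forces $\tilde\phi$ to be an automorphism of $N_0 \oplus P$.

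The main obstacle is deducing that $\phi$ is an isomorphism from the fact that $\tilde\phi$ is, since the lift $\tilde\delta$ of $\delta$ is not unique. The crucial observation is that $N_0$ contains no indecomposable summand isomorphic to $P$ (as an $A$-module) nor to $P'$ (as an $A'$-module), the latter because otherwise one would have $\soc P' = P' \cdot \soc A' \subseteq N_0 \cdot \soc A' = 0$. Hence, by Krull--Schmidt, the off-diagonal entries $\tilde\beta, \tilde\gamma$ lie in $\rad \End_A(N_0 \oplus P)$, and $\beta, \gamma$ lie in $\rad \End_{A'}(N_0 \oplus P')$. Therefore $\tilde\phi$ is an isomorphism if and only if its diagonal blocks $\alpha$ and $\tilde\delta$ are both isomorphisms, and analogously $\phi$ is an isomorphism if and only if $\alpha$ and $\delta$ are. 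The $\alpha$-condition is identical on both sides since $\End_A N_0 = \End_{A/\soc A} N_0 = \End_{A'} N_0$. It remains to observe that $\tilde\delta$ is an isomorphism if and only if $\delta$ is: both are equivalent to $\bar\delta$ being an isomorphism of $P/\soc P = P'/\soc P'$, with the forward implications obtained by applying the projections $p, p'$ and the converses from Nakayama's lemma applied to $\soc P \subseteq \rad P$ and $\soc P' \subseteq \rad P'$ (using that $P, P'$ have finite length). This yields $\phi$ iso, as required.
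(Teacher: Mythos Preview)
Your proof is correct but takes a genuinely different route from the paper's. The paper argues by contradiction via a length comparison: assuming $(u\ v')$ is not minimal, it extracts a proper direct summand $(u_1\ v'_1)\colon N_1\oplus P'_1 \to X$ which is a right minimal $\add M'$-approximation, then swaps the roles of $A$ and $A'$ in Proposition~\ref{prop:3.1} and Lemma~\ref{lem:3.2} to obtain a right $\add M$-approximation $(u_1\ v_1)\colon N_1\oplus P_1 \to X$ in $\mod A$. Lemma~\ref{lem:2.4}(d) gives $\ell(N_1\oplus P_1)=\ell(N_1\oplus P'_1)<\ell(N_0\oplus P')=\ell(N_0\oplus P)$, contradicting minimality of $(u\ v)$ (the minimal approximation being a summand of every approximation by Lemma~\ref{lem:2.1}). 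Your approach instead verifies the definition of right-minimality directly by transporting endomorphisms block by block and reducing to the diagonal via the radical of the endomorphism ring. The paper's argument is shorter and exploits the built-in $A\leftrightarrow A'$ symmetry of the construction, while yours is more explicit and does not re-invoke the earlier results in the reverse direction; it also makes transparent exactly where Lemma~\ref{lem:2.4}(e) and the factorisations $v=wp$, $v'=wp'$ enter.
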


\begin{proof}
Assume that $(u\ \, v')$ is not minimal.
Then there exists a minimal approximation
$(u_1\ \, v'_1) : N_1 \oplus P'_1 \to X$
with $N_1 \in \add N$,
$P'_1$ projective, and a commutative diagram
\[
  \xymatrix@C=4pc@R=.5pc{
   N_1 \oplus P'_1
      \ar[rd]^(.55){\left(\begin{smallmatrix}u&v'_1\end{smallmatrix}\right)}
      \ar[dd]_{s} \\
    & X \\
    N_0 \oplus P'
      \ar[ru]_(.55){\left(\begin{smallmatrix}u&v'\end{smallmatrix}\right)}
  }
\]
where $s$ is a proper section,
see Lemma~\ref{lem:2.1}.
Let $P_1$ be the projective $A$-module such that
$P_1 / \soc P_1 = P'_1 / \soc P'_1$.
Exchanging the r\^oles of $A$ and $A'$ in
Lemma~\ref{lem:3.2},
we get a right $\add M$-approximation
\[
  (u_1\ \, v_1) : N_1 \oplus P_1 \to X
\]
in $\mod A$.
Now, we have $\ell(P_1) = \ell(P'_1)$
because of
Lemma~\ref{lem:2.4}~(d),
and therefore
\begin{align*}
  \ell(N_1 \oplus P_1)
  &= \ell(N_1) + \ell(P_1)
  = \ell(N_1) + \ell(P'_1)
\\ &
  = \ell(N_1 \oplus P'_1)
  < \ell(N_0 \oplus P')
  = \ell(N_0 \oplus P)
  ,
\end{align*}
and this contradicts the minimality of
$(u\ \, v) : N_0 \oplus P \to X$.
Therefore, $s$ is not proper and so
$(u_1\ \, v'_1) : N_1 \oplus P'_1 \to X$
is a right minimal $\add M'$-approximation.
\end{proof}

We are now able to prove our Theorem~\ref{th:A}.

\begin{theorem}
\label{th:3.4}
Let $A, A'$ be socle equivalent  basic and connected
selfinjective algebras.
Then $\repdim A = \repdim  A'$.
Furthermore, if $M = N \oplus A$ is an Auslander generator
for $\mod A$, with $N$ having no projective direct summands,
then $M' = N \oplus A'$ is an Auslander generator for $\mod A'$.
\end{theorem}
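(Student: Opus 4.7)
The plan is to apply the characterisation given by Theorem~\ref{th:2.3} combined with Corollary~\ref{cor:2.2}: it suffices to exhibit, for every $A'$-module $X$, a right minimal $\add M'$-approximation resolution of length $d := \repdim A - 2$. This will give $\repdim A' \leqslant \repdim A$, and the reverse inequality then follows by symmetry, since the identification $A/\soc A = A'/\soc A'$ and the definition of Auslander generator are symmetric in $A$ and $A'$.

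Given an $A'$-module $X$, I would first reduce to the case where $X$ has no projective direct summand (projective summands only contribute trivial $0 \to P' \to P' \to 0$ pieces, which do not affect the length of the approximation resolution). By the discussion preceding Lemma~\ref{lem:2.4}, the indecomposable non-projective $A'$-modules coincide with the indecomposable non-projective $A$-modules, so such an $X$ is also an $A$-module. Since $M = N \oplus A$ is an Auslander generator for $\mod A$, Theorem~\ref{th:2.3} and Corollary~\ref{cor:2.2} give a right minimal $\add M$-approximation resolution
\[
  0 \to M_d \xrightarrow{f_d} M_{d-1} \to \cdots \to M_1 \xrightarrow{f_1} M_0 \xrightarrow{f_0} X \to 0
\]
in $\mod A$, where each $M_i$ decomposes as $N_i \oplus P_i$ with $N_i \in \add N$ and $P_i$ projective.

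Next, I would split the resolution into its constituent short exact sequences $0 \to K_i \to M_i \to K_{i-1} \to 0$ (with $K_{-1} = X$) and apply Proposition~\ref{prop:3.1} to each in turn, replacing the $A$-projective summand $P_i$ by the $A'$-projective module $P'_i$ satisfying $P'_i / \soc P'_i = P_i / \soc P_i$. Since the end-terms of successive short sequences are literally the same modules (the kernels $K_i$ belong to $\mod(A/\soc A)$ and are untouched by the construction of Proposition~\ref{prop:3.1}), these short exact sequences splice together into a complex of $A'$-modules
\[
  0 \to M'_d \xrightarrow{f'_d} M'_{d-1} \to \cdots \to M'_1 \xrightarrow{f'_1} M'_0 \xrightarrow{f'_0} X \to 0
\]
with $M'_i = N_i \oplus P'_i \in \add M'$. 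Lemma~\ref{lem:3.2} identifies each $f'_i$ as a right $\add M'$-approximation of $\Im f'_i$, and Lemma~\ref{lem:3.3} ensures that minimality is preserved. Theorem~\ref{th:2.3} then yields $\repdim A' \leqslant d+2 = \repdim A$, and the symmetric argument (starting from an Auslander generator $M' = N \oplus A'$ for $\mod A'$) gives equality along with the promised explicit construction of the Auslander generator for $\mod A'$.

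The main obstacle I anticipate is verifying that Proposition~\ref{prop:3.1}, Lemma~\ref{lem:3.2}, and Lemma~\ref{lem:3.3} can indeed be applied at each step of the iteration. Their hypotheses require the kernels $K_i$ and the modules $N_i$ to have no projective direct summands. Right minimality of the original $A$-resolution should control the $N_i$, but one must be careful about the $K_i$: if some $K_i$ acquires a projective-injective summand $Q$, then $Q$ splits off as a summand of both $M_i$ and $M_{i-1}$, and one would need to absorb it into the projective part $P_i$ or cancel it out before applying Proposition~\ref{prop:3.1}. A secondary concern is the bookkeeping needed to confirm that the spliced sequence remains exact in $\mod A'$; this reduces to checking that the morphisms produced by Proposition~\ref{prop:3.1} at consecutive steps compose correctly, which holds because the constructions in that proposition leave the kernel-terms and the $N_i$-components completely unchanged.
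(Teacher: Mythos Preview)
Your proposal is correct and follows essentially the same strategy as the paper's proof: reduce to nonprojective $X$, transfer the minimal $\add M$-approximation resolution one short exact sequence at a time via Proposition~\ref{prop:3.1} and Lemmata~\ref{lem:3.2}--\ref{lem:3.3}, and conclude by Theorem~\ref{th:2.3} and symmetry. The paper handles your ``main obstacle'' slightly more directly than you suggest: rather than cancelling a hypothetical projective-injective summand $Q$ of a kernel $K_i$, it simply observes that such a $Q$, being injective, would split off from $M_i$ while lying in $\Ker(M_i \to K_{i-1})$, contradicting right minimality outright---so each $K_i$ is automatically free of projective summands and Proposition~\ref{prop:3.1} applies at every step without any adjustment.
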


\begin{proof}
For simplicity, we may assume $A/\soc A = A'/\soc A'$.
Let $X$ be an indecomposable nonprojective $A$-module,
and let
\[
  0 \to
  N_d \to
  N_{d-1} \oplus P_{d-1} \to
  \dots \to
  N_0 \oplus P_{0} \to
  X \to
  0
\]
be a right minimal $\add M$-approximation resolution,
with $N_i \in \add N$ and the $P_i$ projective for all $i$.
Notice that the minimality of this sequence implies that
the last nonzero term on the left has no projective direct
summand, and therefore belongs to $\add N$.

For each $i$, let $P'_i$ be the projective $A'$-module such that
$P'_i / \soc P'_i = P_i / \soc P_i$.
We claim that the corresponding sequence
\[
  0 \to
  N_d \to
  N_{d-1} \oplus P'_{d-1} \to
  \dots \to
  N_0 \oplus P'_{0} \to
  X \to
  0
\]
is a right minimal $\add M'$-approximation resolution in $\mod A'$.

We prove this claim by induction.
Let first
$(u\ \, v) : N_0 \oplus P_{0} \to  X$
be a right minimal $\add M$-approximation.
Because $M$ is a generator of $\mod \Lambda$,
the morphism $(u\ \, v)$ is injective.
Letting $Y = \Ker(u\ \, v)$, we have a short exact sequence
\[
  0 \to
  Y \to
  N_0 \oplus P_{0} \xrightarrow{(u\ \, v)}
  X \to
  0
\]
in $\mod A$.
If $Y$ has a projective (= injective) direct summand,
then this summand splits off and we have a contradiction
to the minimality of $(u\ \, v)$.
Therefore $Y$ has no projective direct summand.
Applying
Proposition~\ref{prop:3.1},
we get a short exact sequence
\begin{gather}
  \tag{*}
  \label{eq:*}
  0 \to
  Y \to
  N_0 \oplus P'_{0} \xrightarrow{(u\ \, v')}
  X \to
  0
\end{gather}
in $\mod A'$.
Because of Lemmata \ref{lem:3.2} and \ref{lem:3.3},
$(u\ \, v') : N_0 \oplus P'_{0} \to  X$
is a right minimal $\add M$-approximation
in $\mod A'$.

Now, we have a right minimal $\add M$-approximation
resolution of $Y$ in $\mod A$
\[
  0 \to
  N_d \to
  N_{d-1} \oplus P_{d-1} \to
  \dots \to
  N_1 \oplus P_{1} \to
  Y \to
  0
 .
\]
The induction hypothesis yields a right minimal
$\add M'$-approximation resolution of $Y$ in $\mod A'$
\begin{gather}
  \tag{**}
  \label{eq:**}
  0 \to
  N_d \to
  N_{d-1} \oplus P'_{d-1} \to
  \dots \to
  N_1 \oplus P'_{1} \to
  Y \to
  0
  .
\end{gather}
Splicing the sequences (\ref{eq:*}) and (\ref{eq:**})
yields the desired right minimal $\add M'$-approximation
resolution of $X$ in $\mod A'$.
This establishes our claim.

The statement of the theorem now follows easily from
the claim and Theorem~\ref{th:2.3}.
\end{proof}

The reader will observe that, in the course of the proof, we
have constructed a bijection between right minimal
$\add M$-approximation resolutions in $\mod A$ and
right minimal $\add M'$-approximation resolutions in $\mod A'$.

\section{Selfinjective algebras of tilted type}
\label{sec:selfinjective}

In this section, we present some applications of the main
result of the paper to selfinjective algebras, which are socle
equivalent to selfinjective algebras of tilted type.
For background on hereditary and tilted
algebras over arbitrary fields we
refer to \cite[Chapters VII and VIII]{SY7}.
We also refer to \cite{SY4} for general results on
selfinjective algebras of tilted type.

Let $B$ be a basic finite dimensional $K$-algebra and
$1 = e_1 + \dots + e_n$ be a decomposition of the identity
of $B$ into a complete sum of primitive
orthogonal idempotents.
We associate to $B$ a selfinjective locally bounded $K$-category
$\widehat{B}$, called its \emph{repetitive category} \cite{HW}.
The objects of $\widehat{B}$ are the
$e_{m,i}$, with $m \in \bZ$ and $i \in \{1,\dots,n\}$,
and the morphism spaces are defined by
\[
  \widehat{B} \big(e_{k,i}, e_{s,j}\big) =
  \left\{
   \begin{array}{cl}
     e_j B e_i &\mbox{if } k = s \\
     D(e_i B e_j) &\mbox{if } k = s - 1 \\
     0 &\mbox{otherwise.}  \\
   \end{array}
  \right.
\]
We denote by $\nu_{\widehat{B}}$ the so-called
\emph{Nakayama automorphism} of $\widehat{B}$
defined by
\[
  \nu_{\widehat{B}} (e_{m,i}) = e_{m+1,i}
\]
for all $(m,i)$.
A group $G$ of $K$-linear automorphisms of the category
$\widehat{B}$ is said to be \emph{admissible}
if $G$ acts freely on the objects of $\widehat{B}$
and has finitely many orbits.
Then we may consider the orbit category $\widehat{B}/G$
defined as follows, see \cite{Ga}.
The objects of $\widehat{B}/G$ are the $G$-orbits
of objects of $\widehat{B}$
and the morphism spaces are given by
\[
   \big(\!\widehat{B}/G\big)(a,b) =
   \bigg\{
     f_{y,x} \in \prod_{(x,y) \in (a,b)\!\!\!\!\!\!\!\!\!\!\!\!\!\!\!\!} \widehat{B}(x,y)
     \ \Big|\ g f_{y,x} = f_{gy,gx} \mbox{ for all } g \in G, x \in a, y \in b
   \bigg\}
\]
for all objects $a,b$ of $\widehat{B}/G$.
Then $\widehat{B}/G$ is a bounded selfinjective $K$-category
which we identify with the associated finite dimensional
selfinjective $K$-algebra.

An automorphism $\varphi$ of the $K$-category $\widehat{B}$
is called:
\begin{itemize}
 \item
  \emph{positive} if, for every $(m,i)$, we have
  $\varphi(e_{m,i}) = e_{p,j}$
  for some $p \geqslant m$ and $j \in \{1,\dots,n\}$;
 \item
  \emph{rigid} if, for every $(m,i)$, we have
  $\varphi(e_{m,i}) = e_{m,j}$
  for some $j \in \{1,\dots,n\}$;
 \item
  \emph{strictly positive} if it is positive and not rigid.
\end{itemize}

Thus, for instance, the automorphisms $\nu_{\widehat{B}}^n$,
with $n \geqslant 1$, are strictly positive automorphisms
of $\widehat{B}$.

We recall that an algebra $B$ is called \emph{tilted}
if there exists a basic and connected hereditary $K$-algebra $H$
and a multiplicity-free tilting $H$-module $T$ such that $B = \End T$.
Moreover, $B$ is said to be of Dynkin, Euclidean or wild type according as
the valued quiver of $H$ is a Dynkin, Euclidean or wild quiver,
respectively.

We have the following general result, see \cite[Theorem~7.1]{SY4}.

\begin{proposition}
\label{prop:4.1}
Let $B$  be a tilted algebra and $G$ an admissible
torsion-free automorphism group of $\widehat{B}$.
Then $G$ is an infinite cyclic group generated by
a strictly positive automorphism $\varphi$ of $\widehat{B}$.
\end{proposition}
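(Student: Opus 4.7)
My plan is to use the admissibility and torsion-freeness of $G$ together with the structural classification of $K$-linear automorphisms of $\widehat{B}$ when $B$ is tilted. The first easy observation is that $G$ must be infinite: the object set of $\widehat{B}$ is $\{e_{m,i} \mid m \in \bZ,\ 1 \leqslant i \leqslant n\}$, which is infinite, while by admissibility $G$ acts freely on this set with only finitely many orbits.

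The structural input I would invoke is the following trichotomy, which reflects the particularly rigid shape of the Auslander-Reiten quiver of $\widehat{B}$ for $B$ tilted: every automorphism of $\widehat{B}$ is either rigid, strictly positive, or strictly negative (i.e.\ its inverse is strictly positive), and there is a well-defined \emph{shift homomorphism}
\[
  d : \operatorname{Aut}(\widehat{B}) \longrightarrow \bZ,
\]
whose kernel is exactly the subgroup of rigid automorphisms, with $d(\varphi) > 0$ precisely when $\varphi$ is strictly positive. Concretely, one reads off $d(\varphi)$ from the way $\varphi$ permutes the (finitely many) transjective components of the stable Auslander-Reiten quiver of $\widehat{B}$, each of which comes with an intrinsic $\bZ$-grading inherited from the Nakayama automorphism $\nu_{\widehat{B}}$. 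Establishing this classification and the well-definedness of $d$ is the genuine technical heart of the proof, and is where the tilted hypothesis on $B$ is crucially used; this is what I expect to be the main obstacle.

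Granting these structural tools, the rest is essentially formal. I would show that $G \cap \ker d = \{1\}$: if $\varphi \in G$ were rigid and non-trivial, then by rigidity $\varphi$ would permute the finite set $\{e_{0,1}, \ldots, e_{0,n}\}$ of objects at level zero, so some $\varphi^k$ with $k \geqslant 1$ would fix $e_{0,1}$. Freeness of the $G$-action on objects would then give $\varphi^k = \mathrm{id}_{\widehat{B}}$, and torsion-freeness of $G$ would force $\varphi = \mathrm{id}_{\widehat{B}}$, a contradiction. Hence $d$ restricts to an injective homomorphism $G \hookrightarrow \bZ$, and since $G$ is infinite, its image is a non-zero subgroup of $\bZ$, so $G \cong \bZ$. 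Let $\varphi$ be a generator; after replacing $\varphi$ by $\varphi^{-1}$ if needed, we may assume $d(\varphi) > 0$, so that $\varphi$ is strictly positive, which is the desired conclusion.
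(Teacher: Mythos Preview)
The paper does not give its own proof of this proposition: it is stated with the attribution ``see \cite[Theorem~7.1]{SY4}'' and no argument is supplied. So there is no in-paper proof to compare against.

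Your outline is a reasonable reconstruction of how such a result is established, and you are candid about where the real work lies. The formal part of your argument---that $G$ is infinite, that a non-trivial rigid element of $G$ would have a power fixing an object and hence be the identity by freeness plus torsion-freeness, and that an infinite subgroup of $\bZ$ is cyclic---is entirely correct. What you package as the ``shift homomorphism'' and the trichotomy (rigid / strictly positive / strictly negative) is precisely the structural content that requires the tilted hypothesis and is proved in the reference the paper cites; your description of how one would extract it from the transjective components of the stable Auslander--Reiten quiver of $\widehat{B}$ is accurate in spirit. One small point worth making explicit: to conclude that $d(\varphi)>0$ implies $\varphi$ is strictly positive in the sense defined in the paper (namely $\varphi(e_{m,i})=e_{p,j}$ with $p\geqslant m$ for \emph{every} $(m,i)$), you need that the level shift is the same at every object, not merely positive on average or on one component; this uniformity is part of what the structural analysis in \cite{SY4} provides, and it is worth flagging that you are using it.
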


By \emph{selfinjective algebra of tilted type},
we mean an orbit algebra $\widehat{B}/G$, where
$B$ is a tilted algebra and $G$ is an admissible
infinite cyclic group of automorphisms of $\widehat{B}$.
Moreover, a selfinjective algebra $A = \widehat{B}/G$
of tilted type is said to be of Dynkin, Euclidean or wild type
according as the tilted algebra is of Dynkin, Euclidean or wild type,
respectively.

We note that $A$ is representation-finite if and only if
$B$ is of Dynkin type.

The following corollary is the first application of our
Theorem~\ref{th:A}
and the results of
\cite{AST1}, \cite{AST2}.

\begin{corollary}
\label{cor:4.2}
Let $A$ be a selfinjective algebra socle equivalent to
a representation infinite selfinjective algebra of tilted type.
Then $\repdim A = 3$.
\end{corollary}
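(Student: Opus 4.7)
The proof should be a short, two-step combination of Theorem~\ref{th:3.4} with known results on representation dimension of selfinjective algebras of tilted type, and I do not expect any real obstacles.

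The plan is the following. Let $A'$ denote the representation-infinite selfinjective algebra of tilted type to which $A$ is socle equivalent, and write $A' = \widehat{B}/G$ where $B$ is a tilted $K$-algebra and $G$ is an admissible infinite cyclic group of automorphisms of $\widehat{B}$, as in the definition recalled just above the statement. Since the indecomposable nonprojective $A$-modules coincide with the indecomposable nonprojective $A'$-modules (as noted in Section~\ref{sec:preliminary}), $A$ is itself representation-infinite, so neither algebra is semisimple and the representation dimension is defined for both.

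The first step is to apply Theorem~\ref{th:3.4} (our Theorem~\ref{th:A}) to the socle equivalent pair $A$, $A'$. This immediately yields the equality
\[
  \repdim A = \repdim A' .
\]
The second step is to compute $\repdim A'$. Since $A'$ is representation-infinite, the underlying tilted algebra $B$ cannot be of Dynkin type, so $B$ is of Euclidean or wild type. The cited results of \cite{AST1}, \cite{AST2} handle precisely these cases: they establish that every representation-infinite selfinjective algebra of tilted type has representation dimension equal to three. Combining this with the first step gives $\repdim A = 3$, as required.

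The only mildly delicate point is to make sure that the results invoked from \cite{AST1}, \cite{AST2} are stated in sufficient generality to cover both the Euclidean and the wild case of a selfinjective algebra of tilted type, rather than, say, only algebras $\widehat{B}/(\nu_{\widehat{B}}^n)$ for some restricted automorphism. In view of Proposition~\ref{prop:4.1}, which tells us that any admissible torsion-free group $G$ is infinite cyclic generated by a strictly positive automorphism, this is indeed the generality in which those papers work, so no further argument is needed and the corollary follows.
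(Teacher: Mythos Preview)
Your overall strategy matches the paper's: apply Theorem~\ref{th:A} to reduce to $A'$, and then invoke \cite{AST1}, \cite{AST2} to get $\repdim A' = 3$. However, the ``mildly delicate point'' you identify is not the one that actually needs attention. The results of \cite{AST1} and \cite{AST2} are proved for algebras over an \emph{algebraically closed} field, whereas this paper works throughout over an arbitrary field $K$. The paper's proof explicitly addresses this: it notes that the explicit construction of an Auslander generator in \cite{AST1}, \cite{AST2} via the Galois covering $\widehat{B} \to \widehat{B}/G$ goes through over any field, appealing to the general theory of selfinjective algebras of tilted type developed in \cite{SY4}. Your proposal silently assumes this extension and instead worries about the shape of the automorphism group $G$, which is not where the gap lies.

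So the argument is essentially correct in outline, but you should replace your final paragraph with the observation that \cite{AST1}, \cite{AST2} are stated under an algebraically closed hypothesis and that their proofs carry over to arbitrary $K$ by \cite{SY4}.
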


\begin{proof}
Assume $A$ is socle equivalent to a representation-infinite
selfinjective algebra $A' = \widehat{B}/G$ of tilted type.
It follows from
\cite[Theorem]{AST1} and \cite[Theorem~A]{AST2}
that $\repdim A' = 3$ if the
ground
field $K$ is algebraically closed.
In fact, we gave an explicit construction of
an Auslander generator for $\mod A'$, applying the canonical
Galois covering functor $\widehat{B} \to \widehat{B}/G = A'$.
But the arguments used in \cite{AST1} and \cite{AST2}
remain valid for algebras over an arbitrary field $K$,
thanks to general results on selfinjective algebras
of tilted type, presented in \cite{SY4}.
\end{proof}

Let $A$ be a selfinjective algebra.
We denote by $\tau_A$ the Auslander-Reiten translation
in $\mod A$ and by $\Gamma(\mod A)$ its Auslander-Reiten
quiver.
A full valued subquiver $\Delta$ of $\Gamma(\mod A)$
is called a \emph{stable slice} \cite{SY6} if the following
conditions are satisfied:
\begin{enumerate}[(1)]
 \item
  $\Delta$ is connected, acyclic and without projective modules.
 \item
  For any valued arrow $V \xrightarrow{(d,d')} U$ in $\Gamma(\mod A)$
  with $U$ in $\Delta$ and $V$ nonprojective,
  $V$ belongs to $\Delta$ or to $\tau_A \Delta$.
 \item
  For any valued arrow $V \xrightarrow{(d,d')} U$ in $\Gamma(\mod A)$
  with $V$ in $\Delta$ and $U$ noninjective,
  $U$ belongs to $\Delta$ or to $\tau_A^{-1} \Delta$.
\end{enumerate}

A stable slice $\Delta$ of $\Gamma(\mod A)$
is called \emph{regular} if $\Delta$ contains
neither the socle factor $P / \soc P$ nor the radical $\rad P$
of an indecomposable projective $A$-module $P$.
A stable slice $\Delta$ of $\Gamma(\mod A)$
is called \emph{$\tau_A$-rigid} if $\Hom_A(X, \tau_A Y) = 0$
for all indecomposable modules $X,Y$ from $\Delta$.
Because of a result proved in \cite{S1},
a  $\tau_A$-rigid stable slice $\Delta$ of $\Gamma(\mod A$)
is always finite.

Our Theorem~\ref{th:B} is the second application
of Theorem~\ref{th:A} and the main result of \cite{SY6}.

\begin{theorem}
\label{th:4.3}
Let $A$ be a representation-infinite selfinjective algebra
admitting a $\tau_A$-rigid stable slice in $\Gamma(\mod A)$.
Then $\repdim A = 3$.
\end{theorem}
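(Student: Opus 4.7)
The plan is to combine the main theorem of \cite{SY6} with Corollary~\ref{cor:4.2}, so that the work done for socle equivalence in Theorem~\ref{th:3.4} carries all the weight. First, I would invoke the classification result of \cite{SY6}: a basic and connected selfinjective algebra admitting a $\tau_A$-rigid stable slice in its Auslander-Reiten quiver is socle equivalent to a selfinjective algebra $A' = \widehat{B}/G$ of tilted type, where $B$ is a tilted algebra and $G$ is an admissible infinite cyclic group of automorphisms of $\widehat{B}$. The stable slice $\Delta$, once passed to $A/\soc A = A'/\soc A'$ and pulled back through the Galois covering $\widehat{B} \to A'$, exhibits $B$ as the endomorphism algebra of a suitable complete slice.

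Second, I would verify that representation-infiniteness transfers from $A$ to $A'$. As recalled in Section~\ref{sec:preliminary}, socle equivalent selfinjective algebras share the same indecomposable nonprojective modules, so $A'$ is representation-infinite whenever $A$ is. Equivalently, the tilted algebra $B$ is not of Dynkin type, so $A'$ is a representation-infinite selfinjective algebra of tilted type in the precise sense of Section~\ref{sec:selfinjective}.

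Third, I would apply Corollary~\ref{cor:4.2} to $A'$ to obtain $\repdim A' = 3$, and then apply Theorem~\ref{th:3.4} to transport this equality back across the socle equivalence, concluding $\repdim A = \repdim A' = 3$.

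The real difficulty of the overall statement is not in our assembly but inside the structure theorem of \cite{SY6}: reconstructing the tilted algebra $B$, its repetitive category $\widehat{B}$, and an admissible infinite cyclic automorphism group $G$ from the purely combinatorial datum of a $\tau_A$-rigid stable slice is the substantive input. Granted that classification, our role is simply to splice it with Corollary~\ref{cor:4.2} (which already packages \cite{AST1,AST2} together with Theorem~\ref{th:3.4}) and to observe that the hypothesis of representation-infiniteness survives socle equivalence — a point that is automatic from the description of indecomposables recalled in Section~\ref{sec:preliminary}.
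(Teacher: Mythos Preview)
Your plan is the paper's plan: reduce to Corollary~\ref{cor:4.2} via the structure theory of \cite{SY6}. One point of detail is worth flagging. You quote \cite{SY6} as a single classification theorem taking a $\tau_A$-rigid stable slice as input; the paper does not. Instead it first argues, from the slice being regular, that the injective envelope of $\tau_A M$ and the projective cover of $\tau_A^{-1} M$ factor through powers of $M$, and deduces that $\Hom_A(M,\tau_A M)=0$ forces $\Hom_A(\tau_A^{-1}M,M)=0$ as well, so that $\Delta$ is \emph{double} $\tau_A$-rigid; only then does it invoke \cite[Proposition~3.8]{SY6} (to identify $B=A/\operatorname{ann}M$ as tilted) and \cite[Theorem~2]{SY6} (to get the socle equivalence $A\sim\widehat{B}/(\varphi\nu_{\widehat{B}})$). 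If the statement you cite from \cite{SY6} genuinely needs only one-sided rigidity, your shortcut is fine; otherwise that short upgrading argument is the one piece of actual work you are missing. The remaining steps---representation-infiniteness passing across socle equivalence, and the appeal to Corollary~\ref{cor:4.2}---are exactly as in the paper (and note that Corollary~\ref{cor:4.2} already applies directly to $A$, so your separate invocation of Theorem~\ref{th:3.4} is harmless but redundant).
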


\begin{proof}
Assume $\Delta$ is a $\tau_A$-rigid stable slice in $\Gamma(\mod A$).
Let $M$ be the direct sum of all the indecomposable $A$-modules
lying on $\Delta$, $I = \{ a \in A \,|\, M a = 0 \}$ the right annihilator of $M$
and $B = A/I$.
We claim that there exist, for some $r,s \geqslant 1$,
a monomorphism $\tau_A M \to M^r$ and
an epimorphism $M^{s} \to \tau_A^{-1} M$
in $\mod A$, and hence in $\mod B$.

Because $\Delta$ is a regular stable slice in $\Gamma(\mod A)$,
an injective envelope $f : \tau_A M \to I(\tau_A M)$ of $\tau_A M$
and a projective cover $g : P(\tau_A^{-1} M) \to \tau_A^{-1} M$
of $\tau_A^{-1} M$ in $\mod A$ factor through
$M^r$ and $M^s$, respectively, for some $r,s \geqslant 1$.
This establishes the claim.

In particular, $\Hom_A(M, \tau_A M) = 0$ implies that also
$\Hom_A(\tau_A^{-1} M, M) = 0$, so $\Delta$ is a double
$\tau_A$-rigid stable slice of $\Gamma(\mod A)$.
Then, because of \cite[Proposition~3.8]{SY6},
the following statements hold:
\begin{enumerate}[(a)]
 \item
  $M$ is a tilting $B$-module,
 \item
  $H = \End_B M$ is a hereditary algebra,
 \item
  $T = D(M)$ is a tilting $H$-module, and
 \item
  $B = \End_H T$.
\end{enumerate}
Applying  \cite[Theorem~2]{SY6}, we conclude that $A$ is socle equivalent
to an orbit algebra $A' = \widehat{B}/(\varphi \nu_{\widehat{B}})$
for some positive automorphism $\varphi$ of $\widehat{B}$.
Moreover, $B$ is not of Dynkin type.
Therefore $A'$ is a representation-infinite selfinjective algebra
of tilted type.
Applying now Corollary~\ref{cor:4.2}, we get that
$\repdim A = 3$.
We note that the algebras $A$ and $A'$ are not necessarily isomorphic
(see Example~\ref{ex:5.4}).
\end{proof}

Recall that a connected component $\cC$ of an Auslander-Reiten
quiver $\Gamma(\mod A)$ is called \emph{generalised standard} \cite{S2}
whenever, for two modules $X,Y$ in $\cC$, we have
$\rad_A^{\infty}(X,Y) = 0$.
Here, $\rad_A^{\infty}$ denotes the infinite radical of $\mod A$.

We have the following consequence of
Theorem~\ref{th:4.3}, extending \cite[Theorem~B]{AST2}
to algebras over an arbitrary field.

\begin{corollary}
\label{cor:4.4}
Let $A$ be a connected selfinjective algebra admitting
an acyclic generalised standard Auslander-Reiten component.
Then $\repdim A = 3$.
\end{corollary}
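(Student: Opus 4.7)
The plan is to reduce Corollary~\ref{cor:4.4} directly to Theorem~\ref{th:4.3} by producing a $\tau_A$-rigid stable slice inside the given acyclic generalised standard component $\cC$ of $\Gamma(\mod A)$. First I would dispose of representation finiteness: if $A$ were representation-finite, then $\Gamma(\mod A) = \cC$ would be an acyclic translation quiver of a basic connected selfinjective algebra, which (since the only hereditary selfinjective algebras are semisimple) forces a trivial situation. Thus we may assume $A$ is representation-infinite, so that Theorem~\ref{th:4.3} becomes applicable once a $\tau_A$-rigid stable slice has been found.

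Next I would extract a stable slice $\Delta$ from $\cC$. Since $\cC$ is acyclic, its structure is governed by the usual combinatorics of translation subquivers: one can choose a full connected valued subquiver $\Delta$ of $\cC$ meeting every $\tau_A$-orbit of non-projective modules exactly once, chosen so that $\Delta$ contains no projective module and so that every arrow $V \to U$ in $\Gamma(\mod A)$ incident with $\Delta$ satisfies the closure conditions (2) and (3) listed before Theorem~\ref{th:4.3}. This is the standard construction of a section in an acyclic translation quiver, and yields a stable slice in the sense of \cite{SY6}. The requirement that $\Delta$ be $\tau_A$-rigid, namely $\Hom_A(X, \tau_A Y) = 0$ for all indecomposables $X,Y \in \Delta$, is then the content I want to verify.

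For this verification I would split any morphism $X \to \tau_A Y$ into a part lying in $\rad_A^\infty$ and a part which is a sum of compositions of irreducible morphisms. The infinite-radical part vanishes because $\cC$ is generalised standard and both $X$ and $\tau_A Y$ lie in $\cC$ (the latter because $\tau_A$ preserves components). For the finite part, since $\cC$ is acyclic and $\Delta$ is a section, any finite composition of irreducible morphisms starting at $X$ and ending at $\tau_A Y$ would yield a directed path in $\cC$ from $X \in \Delta$ to $\tau_A Y$, which sits one $\tau_A$-step behind $\Delta$; the choice of $\Delta$ as a section and the acyclicity of $\cC$ rule this out. Hence $\Hom_A(X, \tau_A Y) = 0$.

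The main obstacle is the combinatorial part of the second step: producing $\Delta$ with the precise closure properties required by the definition of stable slice, rather than merely finding an abstract section. Once $\Delta$ is in hand and shown to be $\tau_A$-rigid, Theorem~\ref{th:4.3} delivers $\repdim A = 3$ immediately, with no further dependence on the ground field, which is the strengthening over \cite[Theorem~B]{AST2} that the corollary claims.
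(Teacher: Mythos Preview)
Your approach is the paper's: reduce to Theorem~\ref{th:4.3} by extracting from the acyclic generalised standard component $\cC$ a $\tau_A$-rigid stable slice. The paper's proof is a two-line version of exactly this, asserting that $\cC$ is infinite and admits such a slice ``because it contains only finitely many projective modules'' and then invoking Theorem~\ref{th:4.3}. Your outline supplies the details the paper leaves implicit, and your mechanism for $\tau_A$-rigidity (the $\rad^\infty$ part dies by generalised standardness, the finite-path part dies because a section in an acyclic component admits no path to its $\tau_A$-translate) is correct.

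One small wrinkle: your justification for representation-infiniteness via ``the only hereditary selfinjective algebras are semisimple'' is a non sequitur, since acyclicity of $\Gamma(\mod A)$ does not make $A$ hereditary. The clean argument is that for a connected non-semisimple representation-finite selfinjective algebra every non-projective indecomposable is $\tau_A$-periodic, and $\tau_A$-periodicity manufactures oriented cycles in $\Gamma(\mod A)$; hence an acyclic component cannot be all of $\Gamma(\mod A)$. The paper also glosses over this point, so this is a presentational blemish rather than a strategic gap.
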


\begin{proof}
Let $\cC$ be an acyclic generalised standard component in
$\Gamma(\mod A)$.
Then $\cC$ is an infinite component admitting a $\tau_A$-rigid
regular stable slice, because it contains only finitely many
projective modules.
In particular, $A$ is representation-infinite.
Applying Theorem~\ref{th:B} yields $\repdim A = 3$.
\end{proof}

We refer to \cite{SY2}, \cite{SY3} for the structure of module
categories of selfinjective algebras admitting generalised standard
acyclic Auslander-Reiten components.

\section{Examples}
\label{sec:examples}

The aim of this section is to present illustrative examples.
The first two describe selfinjective algebras over an algebraically
closed field which are socle equivalent but not isomorphic
to selfinjective algebras of Euclidean and wild types.

\begin{example}
\label{ex:5.1}
Let $K$ be an algebraically closed field and $Q$ be the quiver
\[
    \xymatrix@C=3pc{ 1 \ar@(dl,ul)[]^{\alpha} \ar@<+.45ex>[r]^{\gamma} & 2 \ar@<+.45ex>[l]^{\beta}} .
\]
Consider the
quotient algebras
$A = K Q/I$ and $A' = K Q/I'$,
where $I$ and $I'$ are the ideals
\begin{align*}
  I &= (\alpha^2 - \alpha \gamma \beta,
      \alpha \gamma \beta + \gamma \beta \alpha,
      \beta \gamma,
      \beta \alpha \gamma \beta), \\
  I' &= (\alpha^2,
      \alpha \gamma \beta + \gamma \beta \alpha,
      \beta \gamma,
      \beta \alpha \gamma \beta).
\end{align*}
Then $A' \cong \widehat{B}/(\varphi)$,
where $B = K \Delta / J$ is the tilted algebra of Euclidean type
$\tilde{\bA}_3$ given by the quiver $\Delta$
\[
  \xymatrix@R=1pc@C=1.5pc{
      && 2 \ar[lld]_{\beta} \\
    1 &&&& 3  \ar[llu]_{\gamma} \ar[llll]_{\alpha}
       && 4  \ar[ll]_{\sigma}
  }
\]
and the ideal $J = (\sigma \gamma)$,
and $\varphi$ is a strictly positive automorphism of $\widehat{B}$
such that there exists a rigid automorphism $\rho$ with
$\varphi^2 = \rho \nu_{\widehat{B}}$.
Moreover, $A / \soc A$ and $A' / \soc A'$ are isomorphic
to the algebra $A^* = K Q/I^*$, where
$I^* = (\alpha^2, \alpha \gamma \beta, \gamma \beta \alpha, \beta \gamma)$.
Hence $A$ and $A'$ are socle equivalent, and therefore
$\repdim A = \repdim A' = 3$,
because of Theorem~\ref{th:B}.
On the other hand, it is easily seen that $A$ and $A'$
are not isomorphic.
We refer to \cite{BoS1} for a general construction of such
socle equivalent algebras.
We also note that $A$ and $A'$ are not stably equivalent,
see \cite[Theorem~1.2]{BoS2}.
\end{example}

\begin{example}
\label{ex:5.2}
Let $K$ be an algebraically closed field and $Q$ be the quiver
\[
    \xymatrix@C=3pc{
      3 \ar@<+.45ex>[r]^{\eta} & \ar@<+.45ex>[l]^{\delta}
      1 \ar@(ul,ur)[]^{\alpha} \ar@<+.45ex>[r]^{\gamma} & 2 \ar@<+.45ex>[l]^{\beta}
    } .
\]
Consider the
quotient algebras
$A = K Q/I$ and $A' = K Q/I'$,
where $I$ and $I'$ are the ideals
\begin{align*}
  I &= (\alpha^2 - \alpha \gamma \beta,
      \alpha \gamma \beta + \gamma \beta \alpha,
      \beta \gamma,
      \beta \alpha \gamma \beta,
      \alpha \gamma \beta - \delta \eta,
      \beta \delta,
      \eta \alpha,
      \eta \gamma,
      \alpha \delta), \mbox{ and} \\
  I' &= (\alpha^2,
      \alpha \gamma \beta + \gamma \beta \alpha,
      \beta \gamma,
      \beta \alpha \gamma \beta,
      \alpha \gamma \beta - \delta \eta,
      \beta \delta,
      \eta \alpha,
      \eta \gamma,
      \alpha \delta).
\end{align*}
Let $C = K \Delta / J$ be the quotient algebra
of the path algebra $K \Delta$ of the quiver $\Delta$
\[
  \xymatrix@R=1pc@C=1.5pc{
      && 2 \ar[lld]_{\beta} \\
    1 &&&& 3  \ar[llu]_{\gamma} \ar[llll]_{\alpha} \ar[ld]^{\delta}
       && 4  \ar[ll]^{\sigma} \\
    & 5 \ar[lu]^{\eta} && 6
  }
\]
by the ideal $J = (\sigma \gamma, \sigma \delta)$.
Then the Auslander-Reiten quiver $\Gamma(\mod C)$ of $C$
admits a unique preinjective component having a section
of the form
\[
  \xymatrix@R=1pc{
    && I_6 \ar[rd] \\
    & I_2 \ar[rr] && S_3 \ar[rd] \\
    I_1 \ar[ru] \ar[rrrr] \ar[rd] &&&& I_3 \\
    & I_5 & .
  }
\]
It follows from \cite[Theorem~5.6]{ASS} that $C$
is a tilted algebra of wild type
\[
  \xymatrix@R=1pc{
    && 6 \ar@{<-}[rd] \\
    & 2 \ar@{<-}[rr] && 4 \ar@{<-}[rd] \\
    1 \ar@{<-}[ru] \ar@{<-}[rrrr] \ar@{<-}[rd] &&&& 3 \,.\!\!\! \\
    & 5
  }
\]
Moreover, a simple checking shows that $A'$ is isomorphic to
the orbit algebra $\widehat{C}/(\psi)$
where $\psi$ is a strictly positive automorphism of $\widehat{C}$
such that there exists a rigid automorphism $\rho$
of $\widehat{C}$ with $\psi^2 = \rho \nu_{\widehat{C}}$.
Hence $A'$ is a selfinjective algebra of wild tilted type.
Further, $A / \soc A$ and $A' / \soc A'$ are both isomorphic
to the quotient algebra $A^* = K Q/I^*$, where
\[
  I^* = (\alpha^2,
      \alpha \gamma \beta,
      \gamma \beta \alpha,
      \beta \gamma,
      \delta \eta,
      \beta \delta,
      \eta \alpha,
      \eta \gamma,
      \alpha \delta) .
  \]
Therefore, $A$ and $A'$ are socle equivalent,
while they are clearly not isomorphic.
Applying Theorem~\ref{th:B}, we obtain
$\repdim A = \repdim A' = 3$.
\end{example}

\begin{example}
\label{ex:5.3}
Let $K$ be an algebraically closed field.
To each nonzero element $\lambda \in K$,
we associate the four-dimensional local selfinjective algebra
\[
   A(\lambda) = K \langle x, y \rangle / (x^2, y^2, x y - \lambda y x) .
\]
For any nonzero elements $\lambda, \mu$,
the algebras $A(\lambda)$ and $A(\mu)$
are socle  equivalent.
On the other hand, it was shown by Rickard
that the algebras $A(\lambda)$ and $A(\mu)$
are stably  equivalent if and only if
$\mu = \lambda$ or $\mu = \lambda^{-1}$,
in which case $A(\lambda)$ and $A(\mu)$
are also isomorphic.
This was done by a careful analysis of actions of the
syzygy operator on the indecomposable $2$-dimensional
modules forming the mouth of the stable tubes of rank $1$
in the stable Auslander-Reiten quiver of the algebra
$A(\lambda)$, see \cite[Example~IV.10.7]{SY5}
for a description of these actions.

We note that $A(1)$ is a selfinjective algebra of the from
$A(1) = \widehat{H}/(\varphi)$, where $H$ is the path algebra
of the Kronecker quiver
\[
    \xymatrix@C=3pc{
      1 \ar@<+.45ex>[r]^{\alpha}\ar@<-.45ex>[r]_{\beta} & 2
    }
\]
and $\varphi$ is an automorphism of $\widehat{H}$ with
$\varphi^2 = \nu_{\widehat{H}}$.
Because of \cite[Theorem]{AST1},
we have $\repdim A(1) = 3$.
Applying now our Theorem~\ref{th:B}, we get that
$\repdim A(\lambda) = 3$ for any $\lambda \in K \setminus \{0\}$.

We also note that $A(-1)$ is the exterior algebra $\Lambda(K^2)$.
It follows from \cite[Theorem~4.1]{R} that,
for any integer $n \geqslant 2$, the exterior algebra $\Lambda(K^n)$
of the $n$-dimensional vector space $K^n$ has representation
dimension $n+1$.
It is thus very natural to expect that there are many selfinjective
algebras $A$ socle equivalent but not isomorphic
(even not stably equivalent) to the exterior algebra
$\Lambda(K^n)$, and then such that
$\repdim A = n+1$.
\end{example}

The next example shows that socle equivalences exist
naturally for Hochschild extensions of hereditary algebras
by duality bimodules.
We refer to
\cite[Chapter~X]{SY7}
for the general theory of Hochschild extensions.

\begin{example}
\label{ex:5.4}
Let $K$ be a field of characteristic $2$,
and $L$ be a finite field extension of $K$ such that the
Hochschild cohomology group $H^2(L,L)$, where
$L$ is considered as a $K$-algebra, is nonzero.
We refer to
\cite[Section~X.5]{SY7}
for such field extensions.

Take a $2$-cocycle $\alpha : L \times L \to L$
corresponding to a nonsplit extension
$0 \to L \to M \to L \to 0$.
For example, we may take $K = \bZ_2(u)$,
the field of rational functions in one variable $u$
over the field $\bZ_2 = \bZ / 2 \bZ$,
and $L = K[X]/(X^2-u)$,
where $K[X]$ is the polynomial algebra
in one variable $X$ over $K$.
Denoting by $x$ the residual class $X+(X^2 - u)$,
we see that $L$ has $\{1,x\}$ as a $K$-basis, and
a nonsplit $2$-cocycle $\alpha : L \times L \to L$ as required
above is given by
\[
  \alpha(x^l, x^m) = x^{l+m}
\]
for $l,m \in \{0,1\}$, see \cite[Example~X.5.4]{SY7}.

Let $Q = (Q_0, Q_1)$ be a finite connected acyclic quiver
without double arrows, and $H = L Q$ be its path algebra
over $L$.
For each point $i \in Q_0$, choose a primitive idempotent $e_i$
of $H$ and for each path from $i$ to $j$, choose an element
$h_{j i} = e_j h_{j i} e_i$ of $H$.
Then $D H = \Hom_L(L Q, L) \cong \Hom_K(L Q, K)$
has a dual basis $e^*_i, h^*_{j i}$ over $L$.
Let $\widetilde{H} = H \oplus D H$ be the direct sum of $H$
and $D H$ considered as $K$-spaces, and define a multiplication
on $\widetilde{H}$ in the following way
\[
  (a,v)(b,w)
   = \Big(a b, a w + v b + \sum_{i \in Q_0} \alpha(a_i,b_i) e^*_i \Big)
\]
for $a,b \in H$, $v,w \in D H$, where $a_i,b_i \in L$ are such that
\begin{align*}
  a&= \sum a_i e_i + \sum r_{j i} h_{j i},
  &
  b&= \sum b_i e_i + \sum s_{j i} h_{j i},
\end{align*}
for $r_{j i}, s_{j i} \in L$ are the basis presentations of $a$ and $b$.

Letting $\rho : \widetilde{H} \to H$ denote the canonical epimorhism
and $\omega : D H \to \widetilde{H}$ the embedding,
we have a nonsplit Hochschild extension
\[
  0 \to D H \xrightarrow{\omega} \widetilde{H}  \xrightarrow{\rho} H \to 0,
\]
see \cite[Theorem~X.6.7]{SY7}.
Moreover, $\widetilde{H}$ is selfinjective, and even
weakly symmetric, and the elements
$\widetilde{e}_i = (e_i, - \alpha(1,1) e^*_i) \in \widetilde{H}$
form a complete set of orthogonal primitive idempotents
of $\widetilde{H}$.
Because of \cite[Corollary~4.2]{SY1},
$\widetilde{H}$ is socle equivalent to the trivial extension
$\operatorname{T}(H) = H \ltimes D H$.
We also know that $\widetilde{H}$ is not isomorphic to
an orbit algebra $\widehat{B}/(\varphi \nu_{\widehat{B}})$
where $B$ is a $K$-algebra and $\varphi$ is a positive
automorphism of $\widehat{B}$ (see \cite[Proposition~4]{SY3}).
Clearly,
$\widetilde{H}$ and $\operatorname{T}(H)$ are not isomorphic.
\end{example}

We end with an example of socle equivalence of symmetric
algebras arising from triangulated surfaces.

\begin{example}
\label{ex:5.5}
Let $K$ be an algebraically closed field,
$m$ a positive natural number, $c$ a nonzero scalar
from $K$,
and $b_{\bullet} : \{1,2,3\} \to K$ a function.

Consider the surface $S$ of the triangle $T$
\[
  \xymatrix@R=2pc@C=3pc{
      & \bullet \ar@{-}[ld]_{1} \\
    \bullet && \bullet  \ar@{-}[lu]_{2} \ar@{-}[ll]_{3}
  }
\]
where $1,2,3$ are boundary edges, and let $\vec{T}$
be the clockwise orientation $(1\ 2 \ 3)$ of the edges of $T$.

According to \cite[Section~4]{ES1},
we may associate to the pair $(S,\vec{T})$
the triangulation quiver $(Q(S,\vec{T}),f)$ of the form
\[
  \xymatrix@R=2pc@C=3pc{
     1 \ar@(dl,ul)[]^{\varepsilon} \ar[rr]^{\alpha} &&
     2 \ar@(ur,dr)[]^{\eta} \ar[ld]^{\beta} \\
     & 3 \ar@(dr,dl)[]^{\mu} \ar[lu]^{\gamma}}
\]
where $f$ is the permutation of arrows defined as follows
\begin{align*}
 f(\alpha) &= \beta, &
 f(\beta) &= \gamma, &
 f(\gamma) &= \alpha, &
 f(\varepsilon) &= \varepsilon, &
 f(\eta) &= \eta, &
 f(\mu) &= \mu
\end{align*}
(see \cite[Example~4.3]{ES1}).
Consider the bound quiver algebra
\[
  \Lambda(S,\vec{T},m,c,b_{\bullet})
  = K Q\big(S,\vec{T})/ I(Q(S,\vec{T}),f,m,c,b_{\bullet}\big)
\]
where $I(Q(S,\vec{T}),f,m,c,b_{\bullet})$
is the admissible ideal in the path algebra $K Q(S,\vec{T})$
generated by the elements
\begin{align*}
 &\alpha\beta - c(\varepsilon\alpha\eta\beta\mu\gamma)^{m-1} \varepsilon\alpha\eta\beta\mu ,
 &
 \varepsilon^2 &- c(\alpha\eta\beta\mu\gamma\varepsilon)^{m-1} \alpha\eta\beta\mu\gamma
- b_1(\alpha\eta\beta\mu\gamma\varepsilon)^{m},
\\
 &\beta\gamma - c(\eta\beta\mu\gamma\varepsilon\alpha)^{m-1} \eta\beta\mu\gamma\varepsilon ,
 &
 \eta^2 &- c(\beta\mu\gamma\varepsilon\alpha\eta)^{m-1} \beta\mu\gamma\varepsilon\alpha
- b_2(\beta\mu\gamma\varepsilon\alpha\eta)^{m},
\\
 &\gamma\alpha - c(\mu\gamma\varepsilon\alpha\eta\beta)^{m-1} \mu\gamma\varepsilon\alpha\eta,
 &
 \mu^2 &- c(\gamma\varepsilon\alpha\eta\beta\mu)^{m-1} \gamma\varepsilon\alpha\eta\beta
- b_3(\gamma\varepsilon\alpha\eta\beta\mu)^{m},
\\
 &
 \quad
\alpha\beta\mu,
 \qquad
 \ \quad
 \varepsilon^2 \alpha,
 \qquad
 \ \quad
 \beta\gamma\varepsilon,\!\!\!\!\!\!\!\!\!\!\!\!\!\!
 &&
 \quad
 \ \,\qquad
 \eta^2 \beta,
 \qquad
 \ \quad
 \gamma\alpha\eta,
 \ \quad
 \qquad
 \mu^2 \gamma.
\end{align*}
We also denote by $0$ the zero function from $ \{1,2,3\}$ to $K$
and set
$\Lambda(S,\vec{T},m,c) = \Lambda(S,\vec{T},m,c,0)$.
Following \cite{ES1},
$\Lambda(S,\vec{T},m,c)$ is called a \emph{weighted surface algebra}
of $(S,\vec{T})$.
Because of \cite[Propositions 8.1 and 8.2]{ES1},
$\Lambda(S,\vec{T},m,c)$ and $ \Lambda(S,\vec{T},m,c,b_{\bullet})$
are socle equivalent representation-infinite tame symmetric
algebras of dimension $36m$, which are isomorphic
if the characteristic of $K$ is different from $2$.
On the other hand, it was shown in \cite[Example~8.4]{ES1} that,
if $\operatorname{char} K = 2$, $m = 1$ and $b_{\bullet}$ is nonzero,
then the algebras
$\Lambda(S,\vec{T},m,c)$ and $ \Lambda(S,\vec{T},m,c,b_{\bullet})$
are not isomorphic.
We refer to \cite[Section~8]{ES1} and \cite[Section~6]{ES2}
for socle equivalence of representation-infinite tame symmetric
algebras associated to arbitrary triangulated surfaces with
nonempty boundary.
\end{example}

\section*{Acknowledgements}
All authors were supported by the research grant DEC-2011/02/A/ST1/
00216 of the National Science Center Poland.
The first author was also partially supported by the NSERC of Canada, and the third author was also partially supported by ANPCyT, Argentina.
The paper was completed during the visit of I. Assem and S. Trepode
at Nicolaus Copernicus University in Toru\'n (August 2017).

The results of the paper were partially presented by the second
named author during the conference
``Idun 75. A conference on representation theory of artin algebras
on the occasion of Idun Reiten's birthday'' (Trondheim, May 2017)
and by the third named author during the
``Joint Meeting of Sociedad Matem\'atica Espa\~nola and Uni\'on Matem\'atica Argentina''
(Buenos Aires, December 2017).

%
%








\section{References}
\bibliographystyle{amsalpha}

\end{document}